\documentclass{article}

\usepackage{graphics,graphicx,subfig,color}
\usepackage{amsmath,amsfonts,amscd,amssymb,bm}
\usepackage{mathrsfs}
\usepackage{mathtools,eqparbox}
\usepackage{fancyhdr}
\usepackage{listings}
\usepackage{xcolor}
\usepackage{lingmacros}
\usepackage{bbm} 
\usepackage{blindtext}
\usepackage{tree-dvips}
\usepackage{empheq}
\usepackage{enumerate}
\usepackage{array}
\usepackage[section]{placeins}
\usepackage{marginnote}
\usepackage{float}
\usepackage{textcomp}
\usepackage[makeroom]{cancel}
\usepackage{verbatim}
\usepackage{geometry}
\numberwithin{equation}{section}

\usepackage{amsthm}
\theoremstyle{definition}
\newtheorem{definition}{Definition}[section] 

\theoremstyle{plain}
\newtheorem{theorem}[definition]{Theorem}
\theoremstyle{remark} 
\newtheorem{remark}[definition]{Remark}
\theoremstyle{definition}
\newtheorem{assumption}[definition]{Assumption}
\usepackage{bm}
\usepackage{comment}
\excludecomment{confidential} 
\usepackage{cancel}
\usepackage{centernot}

\DeclareMathOperator{\tr}{tr}

\DeclareMathOperator{\sym}{sym}

\newcommand{\dt}{\mathrm{d}t}

\newcommand{\D}{\mathrm{d}}

\newcommand{\bn}{\boldsymbol{n}}

\usepackage{xfrac}

\newcommand{\vertiii}[1]{{\left\vert\kern-0.25ex\left\vert\kern-0.25ex\left\vert #1\right\vert\kern-0.25ex\right\vert\kern-0.25ex\right\vert}}

\newcommand{\eps}{\varepsilon}

\newcommand{\xb}{{\boldsymbol {x}}}

\newcommand{\bC}{\boldsymbol  C}

\newcommand{\cE}{{\mathcal{E}}}

\newcommand{\bb}{\boldsymbol B}
\newcommand{\bSigma}{\boldsymbol{\Sigma}}
\newcommand{\bu}{\mathbf u}

\newcommand{\bfv}{\boldsymbol v}

\newcommand{\bfx}{\boldsymbol x}
\newcommand{\bfz}{\boldsymbol z}
\newcommand{\bff}{\boldsymbol f}

\newcommand{\bfB}{\boldsymbol B}
\newcommand{\bfA}{\boldsymbol A}

\newcommand{\bF}{\mathbf F}
\newcommand{\bD}{\mathbf D}

\newcommand{\bfsigma}{\boldsymbol{\sigma}}

\allowdisplaybreaks

\title{The diffuse interface approximation 
to
fluid-structure interaction}

\author{Francis R.~A.~Aznaran\thanks{Department of Applied and Computational Mathematics and Statistics, University of Notre Dame, USA, \texttt{faznaran@nd.edu}}
\and Martina Buka\v{c}\thanks{Department of Applied and Computational Mathematics and Statistics, University of Notre Dame, USA, \texttt{mbukac@nd.edu}}
\and Boris Muha\thanks{Department of Mathematics, Faculty of Science, University of Zagreb, Croatia, \texttt{borism@math.hr}}
}

\begin{document}

\maketitle

\begin{abstract}
We consider a fluid-structure interaction problem in the Eulerian, phase-field formulation. The problem is described using the Navier-Stokes equations for a viscous, incompressible fluid,  coupled with the incompressible hyperelasticity system, both written in the Eulerian coordinates. This allows the problem to be written in a unified formulation, using a single field for the fluid and structure velocities. To track the position of the domain, we use a phase-field approach,  resulting in a coupled Cahn-Hilliard-Navier-Stokes-type of problem for the diffuse interface fluid-structure interaction.  Under certain assumptions, we prove the convergence of the diffuse interface model  to the sharp interface fluid-structure interaction problem. To solve the problem numerically, we propose a novel, strongly coupled, second-order partitioned computational method where the system is decoupled into the Cahn-Hilliard problem, the transport problem for the left Cauchy-Green deformation tensor, and the Navier-Stokes problem. The  problems are solved iteratively until convergence at each time step. The performance of the method is illustrated on two computational examples. 

\end{abstract}

\section{Introduction}

Fluid-structure interaction (FSI) occurs in many applications, such as aerodynamics, hemodynamics, and biomedical engineering. It is a moving domain, multiphysics problem characterized by strong, nonlinear coupling, where the current domain is one of the unknowns of the problem. The FSI problems have been classically approximated using sharp interface methods, where the mesh nodes are aligned with the interface~\cite{badia2008modular,gee2011truly,serino2019stable,burman2022stability,gigante2021stability,bukavc2023time,bukavc2021refactorization}. In that context, the problem is commonly formulated in the Arbitrary Lagrangian-Eulerian (ALE) formulation.

Fully Eulerian formulations have also been used for FSI, with the advantage that they naturally accommodate large structural deformations and avoid the mesh distortion issues inherent in ALE approaches. Eulerian FSI formulations have been studied in~\cite{liu2001eulerian,dunne2006eulerian,laadhari2013fully,richter2013fully,wick2013fully,valkov2015eulerian,rycroft2020reference,nishiguchi2019full}. However, the main difficulty is that some tracking of the domain is still necessary. This has been done using the Initial Point Set method~\cite{dunne2006eulerian,richter2013fully}, where material points are tracked in a Lagrangian manner to reconstruct the domain occupied by the structure, or through level-set methods that implicitly represent the interface using the Reference Map Technique~\cite{valkov2015eulerian,rycroft2020reference}. We also mention  Eulerian formulations in the context of the Oldroyd-B model studied in~\cite{garcke2022viscoelastic,garcke2026parametric,garcke2024approximation,malek2018thermodynamics,Lozinski2003,BarrettLuSuli2017}.

To track the fluid and structure domains, in this work, we consider a phase-field approach based on the diffuse interface method.  This approach uses an  indicator function $\phi$, 
which is initially set equal to $1$ in the fluid subdomain
and $-1$ in the structure subdomain.
This phase-field  transitions rapidly but continuously between the subdomains in a  `diffuse interface' layer of width $\mathcal{O}(\eps)$.
Using the phase-field approach, the mesh nodes do not have to be aligned with the interface, which is well-suited for problems undergoing large topological changes. Additionally, the diffuse interface formulation naturally handles contact problems without requiring explicit contact detection algorithms or imposing contact constraints. This makes the phase-field framework particularly attractive for simulating scenarios involving interacting deformable bodies in fluid flow, such as cell aggregation, particle-laden flows, or valve dynamics, where traditional sharp interface methods would require complex and computationally expensive contact resolution procedures.

The diffuse interface approach has previously been used to model FSI in~\cite{Mokbel2018,Mao2023,rath2023interface,valizadeh2025monolithic,sun2014full,mao20243d,rath2024efficient}. A thermodynamically consistent phase-field model for FSI was derived in~\cite{Mokbel2018}, together with a formal sharp interface limit showing convergence of the derived equations to a traditional FSI formulation. An interface and geometry preserving method for Eulerian, phase-field fluid-structure interaction was presented in~\cite{Mao2023,rath2023interface} and demonstrated using numerical examples. Monolithic numerical methods for the Eulerian phase-field FSI problem have been presented in~\cite{valizadeh2025monolithic,sun2014full}, while the work in~\cite{valizadeh2025monolithic} also considered multi-body contact problems. Recent work in~\cite{mao20243d,rath2024efficient} considered extensions of their previous work to 3D problems and contact dynamics.

We consider the FSI between an incompressible, viscous fluid and an incompressible, hyperelastic structure. The structure is described using the  incompressible
neo-Hookean model, and the system is written in a unified Eulerian formulation using single velocity and pressure fields, together with an equation describing the  transport of the left Cauchy–Green deformation tensor. The problem is fully coupled to the Cahn-Hilliard model for the phase-field. This work is focused on the modeling error analysis. We show that  the modeling error measured in the energy norm converges to zero assuming that the phase-field  converges to the characteristic function
of the fluid domain. We also propose a partitioned, strongly-coupled numerical method for this problem where the Cahn-Hilliard equations, the transport of the  left Cauchy-Green deformation tensor, and the Navier-Stokes euqations are all solved separately. Within each time step, the three problems are subiterated until convergence.  The proposed method is used to study the numerical rates of convergence and to demonstrate the method's potential on a contact problem. 

The rest of this paper is organized as follows. Section 2 presents the sharp and diffuse mathematical models for FSI. Modeling error analysis is preformed in Section 3, and the numerical method is presented in Section 4. The numerical results are presented in Section 5, and Section 6 summarizes the key findings and draws conclusions.

\section{Mathematical model}

Assume $\hat{\Omega}_S, \hat{\Omega}_F \subset \mathbb{R}^d, d \in \{2, 3\}$, are open bounded 
disjoint
domains representing the reference structure and fluid regions, respectively, with Lipschitz boundaries.
We denote $\hat{\Omega} = \hat{\Omega}_S \cup \hat{\Omega}_F .$
Let $\hat{\xb}$ denote the 
coordinates 
in the reference configuration $\hat{\Omega}$, and $\xb$ denote the 
coordinates
in the physical configuration $\Omega$. Hats will in general denote variables 
and operators 
defined in the reference domain. While the solid domain moves in time, we assume that the 
overall 
domain $\Omega$ remains fixed, i.e., $\Omega(t) = \hat{\Omega}~\forall~t\geq 0$ (see Figure~\ref{domain}).
We first present the structure problem in the Lagrangian and Eulerian formulations, and then write the coupled FSI problem in the Eulerian form.
\begin{figure}[H]
    \centering
    \includegraphics[width = 0.50\textwidth]{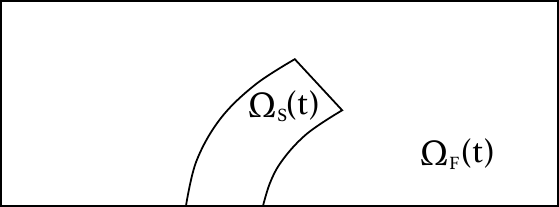}
    \caption{An example of fluid and structure domains at time $t$.}
    \label{domain}
\end{figure}

\subsection{Elastodynamics problem in the Lagrangian and Eulerian form}\label{solid}

The elasticity equation in the {Lagrangian formulation} is given as~\cite[p.~37]{Richter2017}:
\begin{align}
    \hat{J} \hat{\rho} \partial_{tt} \hat{\bu} = \hat{\nabla} \cdot \hat{\boldsymbol P} + \hat{J}\hat{\rho}\hat{\bff},
\end{align}
where $\hat{\bu}$ is the displacement, $\hat{\rho}$ is the density, $\hat{\boldsymbol P} = \hat{\bF} \hat{\Sigma}$ is the first Piola--Kirchhoff stress, 
$\hat{\bff}$ is a volume force, $\hat{\bF}$ is the deformation gradient defined by 
$
\hat{\bF} = \mathbf{I} + \hat{\nabla} \hat{\bu} (\hat{\xb}, t) 
$
\cite[Def.~2.1]{Richter2017},
$\hat{J} = \det(\hat{\bF})$, and $\hat{\bSigma}$ is the second Piola--Kirchhoff stress tensor. We note that  when computing gradients, we use  the convention $(\nabla\bfv)_{ij} = \frac{\partial v_i}{\partial x_j}$.

Using $\hat{\nabla} \hat{\boldsymbol v} = \nabla \boldsymbol v \hat{\bF}$~\cite[eq.~(2.16)]{Richter2017}, we have
\begin{gather}\label{dtF}
    D_t \hat{\bF} = D_t \hat{\nabla} \hat{\bu} = \hat{\nabla} \hat{\bfv} = \nabla{\bfv} \hat{\bF},
\end{gather}
where $D_t = \partial_t + \bfv \cdot \nabla$ denotes the material derivative, $\bfv$ is the structure velocity, and $\hat{\bfv} = D_t \hat{\bu}$~\cite{Mokbel2018,Mao2023}.
The inverse of $\hat{\bF}$ is defined as~\cite{Richter2017}
\[
    \bF = \hat{\bF}^{-1} := \mathbf{I} - \nabla \bu,
\]
and the left Cauchy--Green tensor is given by
\begin{align*}
    & 
    \bb^{-1} = \hat{\bF}^{-\top} \hat{\bF}^{-1} = \bF^{\top} \bF,
    \text{ so that }
    \bb = \hat{\bF}\hat{\bF}^{\top} = \bF^{-1} \bF^{-\top}.
\end{align*}
Assuming that the material is hyperelastic, the second Piola--Kirchhoff stress tensor can be computed as
\begin{align}
    \hat{\bSigma} =  \hat{\bF}^{-1}  \partial_{\hat{\bF}} \hat{\Psi}= 2 \partial_{\hat{\boldsymbol C}} \hat{\Psi},
\notag
\end{align}
where $\hat{\Psi}$ is the  strain energy density function,  and $\hat{\bC} = \hat{\bF}	^{\top} \hat{\bF}$ is the right Cauchy--Green deformation tensor~\cite{Richter2017}.
For an incompressible neo-Hookean constitutive model~\cite{Jain2017}, the strain energy density function is given as
\[
    \hat{\Psi} (\hat{\bC}) = \frac{G_s}{2}(\tr(\hat{\bC}) - d),
\]
where $G_S$ is the shear modulus. 
Note that $ \tr(\hat{\bC}) = \tr(\hat{\bF}^{\top}\hat{\bF}) = |\hat{\bF}|^2$,
and that 
$ \partial_{\hat{\bF}} \hat{\Psi} (\hat{\bF}) = G_S \hat{\bF}.
$ Therefore, we have
\begin{gather*}
\hat{\boldsymbol P} = \hat{\bF} \hat{\bSigma} = G_S \hat{\bF}.
\end{gather*}

In the Eulerian formulation, the elastodynamics equation is given by~\cite{Richter2017}
\begin{align}
    {\rho} D_{t} {\bfv} = {\nabla} \cdot {\bfsigma} + \rho\bff,
\end{align}
where $\bfv$ is the solid velocity, and $\bfsigma$ is the Cauchy stress tensor given by
$\bfsigma = J \bF^{-1} \bSigma \bF^{-\top}$. 
We define $\bSigma$ in the Eulerian coordinates as
\begin{align*}
    \bSigma := \hat{\bSigma} = G_S \mathbf{I}. 
\end{align*}
Taking into account that the model is incompressible, we can write the Cauchy stress tensor as
\begin{align*}
    \bfsigma = G_S \bF^{-1} \bF^{-\top} = G_S \bb. 
\end{align*}

\subsection{The sharp interface FSI model in the Eulerian form}

The coupled FSI problem in the Eulerian form is given as follows~\cite{Mokbel2018}:
\begin{align}
& 
D_t \left( \rho \bfv \right)   
-\nabla \cdot \left( 2\mu \bD(\bfv)  + G \bb \right)
 +\nabla p=  \rho \boldsymbol f & \textrm{in } \;\ \Omega \times (0, T),
\label{s1}
\\
& \nabla \cdot \bfv=0  & \textrm{in } \;\ \Omega \times (0, T),
\label{s2}
\\
    & G\left( D_t \bfB - (\nabla \bfv) \bfB - \bfB (\nabla \bfv)^{\top} \right) + \alpha (\bb - \mathbf{I}) = 0 & \textrm{in } \;\ \Omega \times (0,T), 
\label{s3}
\\\label{s4}
& \bfv = 0  & \textrm{on } \;\ \partial \Omega \times (0,T),
\end{align}
where $\bD(\bfv)=\frac12(\nabla \bfv + \nabla \bfv^{\top})$, 
and 
the forcing term and problem parameters are defined as
\begin{align*}
&    \boldsymbol f =  
    \begin{cases}
        \boldsymbol{f}_F & \textrm{in } \Omega_F(t),
        \\
        \boldsymbol{f}_S & \textrm{in } \Omega_S(t),
    \end{cases}
   \qquad
    \rho := \begin{cases}
        \rho_F & \textrm{in } \Omega_F(t),
        \\
        \rho_S & \textrm{in } \Omega_S(t),
    \end{cases}
    \qquad
    \mu := \begin{cases}
        \mu_F & \textrm{in } \Omega_F(t),
        \\
        \mu_S & \textrm{in } \Omega_S(t),
    \end{cases}
\\
&    G := \begin{cases}
        G_F & \textrm{in } \Omega_F(t),
        \\
        G_S & \textrm{in } \Omega_S(t),
    \end{cases}
    \qquad
    \alpha = \begin{cases}
        \alpha_F & \textrm{in } \Omega_F(t)
        \\
        0 & \textrm{in } \Omega_S(t),
    \end{cases}
\end{align*}
with $\rho_F$ and $\rho_S$ denoting the fluid and structure densities, and $\mu_F$ and $\mu_S$ denoting the fluid and structure viscosities, respectively.
Note that $\mu_S = 0$ for purely elastic structures. Additionally, we introduce a small positive parameter $G_F \ll 1$ in the definition of $G$ to ensure that $G$ remains strictly positive throughout the fluid region. This regularization is essential for the convergence analysis of the modelling error, as it prevents degeneracy in the elastic stress term. Physically, this corresponds to introducing a small artificial elasticity in the fluid, whose effect can be made negligible by choosing $G_F$ sufficiently small.
The parameter $ \alpha_F$ is positive, and does not come from the physical properties of the problem. 
The ratio of 
$G_S / \alpha_F$ 
can be seen as the interface relaxation time, which needs to scale with the final time $T$.   
In the sharp interface model, we assume that we always know the exact location of the fluid and structure domains, and therefore the interface between them.

Equation~\eqref{s3} contains the upper-convected Maxwell time derivative which rotates and stretches with the deformation~\cite{Mao2023}.
Initially, we assume that the fluid is at rest, and that the strain in the undeformed configuration satisfies $\bb = \mathbf{I}$, where $\mathbf{I}$ the identity matrix. In the fluid, the elastic stress vanishes since there is no strain, whence $\bb = \mathbf{I}$ in $\Omega_F (t)$.

\subsubsection{Weak formulation and energy estimates for the sharp interface model}

We note that $\bb$ is a symmetric and positive definite tensor. The set $\mathbb{S}_{++}$
of such tensors does not 
form
a vector space.
However, by the symmetry and positive-definiteness, we have that
$\tr \bb \geq 0$, and $\tr \bb \equiv 0$ if and only if $\bb \equiv \mathbf 0$.
Thus (as done in~\cite{Lozinski2003}) we can define the nonnegative functional
\begin{equation}\label{eq:trace-functional}
    \| \bb \|_{*, \Omega} := \int_{\Omega} \tr \bb
\end{equation}
with domain 
$M_{tr}:=\{\bfA:\Omega\to\mathbb{S}_{++}~\vert~\tr\bfA\in L^1(\Omega)\}$.

Multiplying~\eqref{s1} by a test function $\bfz \in H^1_0(\Omega)$,~\eqref{s2} by $q \in L^2_0(\Omega)$, and~\eqref{s3} by $\bfA\in L^2(\Omega; \mathbb{R}^{d\times d}_{\rm sym})$, integrating by parts and adding equations together, we obtain the following weak formulation:
seek $\bfv \in L^{\infty}(0,T;L^2(\Omega))\cap L^2(0,T;H^1_0(\Omega))\cap L^2(0,T;H^{-1}(\Omega)), p \in L^2(0,T;L^2_0(\Omega))$, and $\bb \in L^2(0,T;M_{tr})$, $\|\bb\|_*\in L^{\infty}(0,T)$ and $D_t \bfB\in L^2(0, T; L^2(\Omega; \mathbb{R}^{d\times d}_{\sym}))$ such that
for a.e.~$t\in (0, T)$,
\begin{subequations}\label{eq:sharp-weak-form}
    \begin{alignat}{2}
 &       \int_\Omega 
        \partial_t(\rho\bfv)\cdot\bfz
        +  \int_\Omega  [(\bfv\cdot\nabla)\cdot (\rho \bfv)]\cdot\bfz
        + \int_\Omega 2\mu\bD(\bfv):\bD(\bfz)
        + \int_{\Omega} G \bfB:\bD(\bfz)
        - \int_\Omega p\nabla\cdot\bfz
        + \int_\Omega q\nabla\cdot\bfv
        \notag
        \\
&    \qquad    = \int_\Omega \rho\bff\cdot\bfz,
        \label{sw1}
      \\
 &      \int_{\Omega} q \nabla \cdot \bfv =0 
        \label{sw2}
        \\
&        \int_{\Omega} G D_t\bfB:\bfA 
        - \int_{\Omega} G(\nabla\bfv)\bfB:\bfA
        - \int_{\Omega} G\bfB(\nabla\bfv)^{\top}:\bfA
        + \int_{\Omega} \alpha (\bb - \mathbf{I}) :\bfA
        = 0,
        \label{sw3}
    \end{alignat}
\end{subequations}
for all $\bfz \in H^1_0(\Omega), q \in L^2_0(\Omega)$, and $\bfA \in L^2(\Omega; \mathbb{R}^{d\times d}_{\rm sym})$. 

The sharp interface model satisfies the following energy estimate. 
\begin{theorem}
    Let $(\bfv, p, \bb)$ be a sufficiently smooth solution of the sharp interface problem~\eqref{s1}--\eqref{s4} with initial conditions $\bfv(\cdot, 0) = \bfv_0$ and $\bb(\cdot, 0) = \bb_0$.
    Then, the following energy estimate holds:
    \begin{align*}
        & 
        \frac{\D}{\dt} \left(
            \frac12 \int_\Omega \rho |\bfv|^2
            + \frac{1}{2} \int_\Omega G \tr \bb
        \right)
        + \int_\Omega 2\mu |\bD(\bfv)|^2
         +\frac12 \|\alpha_F  \bfB \|_{*,\Omega_F(t)} 
    = \frac12 \alpha_F d |\Omega_F(t)|+\int_\Omega \rho \bff \cdot \bfv.
    \end{align*}
\end{theorem}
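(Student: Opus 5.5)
The plan is the standard energy argument. We test the momentum equation~\eqref{s1} with $\bfv$ and take the trace of the constitutive equation~\eqref{s3}, which amounts to testing with $\frac12\mathbf{I}$. The elastic work term $\int_\Omega G\bfB:\bD(\bfv)$ then cancels against the stretching terms of the upper-convected derivative, and only the kinetic energy, the elastic energy $\frac12\int_\Omega G\tr\bfB$, the viscous dissipation and the relaxation term remain.

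\emph{Step 1: the momentum equation.} We use the weak form~\eqref{sw1} with $\bfz=\bfv$ and~\eqref{sw2} with $q=p$, so the pressure term drops out because $\nabla\cdot\bfv=0$. The viscous term gives $\int_\Omega 2\mu|\bD(\bfv)|^2$ directly. For the inertial term we use that $\rho$ is piecewise constant and advected with the flow, i.e.\ $D_t\rho=0$, since the interface moves with $\bfv$. Hence $D_t(\rho\bfv)\cdot\bfv=\rho D_t\bfv\cdot\bfv=\frac12\rho D_t|\bfv|^2$. By the Reynolds transport theorem, together with $\nabla\cdot\bfv=0$ and $\bfv=0$ on $\partial\Omega$, we have $\int_\Omega \rho D_t\bigl(\tfrac12|\bfv|^2\bigr)=\frac{\D}{\dt}\frac12\int_\Omega\rho|\bfv|^2$. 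This step yields
\begin{align*}
\frac{\D}{\dt}\frac12\int_\Omega\rho|\bfv|^2+\int_\Omega 2\mu|\bD(\bfv)|^2+\int_\Omega G\bfB:\bD(\bfv)=\int_\Omega\rho\bff\cdot\bfv .
\end{align*}

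\emph{Step 2: the constitutive equation.} We take $\bfA=\frac12\mathbf{I}$ in~\eqref{sw3}.
\begin{itemize}
\item Since $\bfB$ is symmetric, $\tr((\nabla\bfv)\bfB)=\bfB:\nabla\bfv=\bfB:\bD(\bfv)$, and the same holds for $\tr(\bfB(\nabla\bfv)^\top)$. The two stretching terms therefore contribute $-\int_\Omega G\bfB:\bD(\bfv)$, which cancels the elastic work term from Step 1 exactly.
\item The transport term becomes $\frac12\int_\Omega G D_t\tr\bfB$. Because $G$ is also piecewise constant and transported ($D_tG=0$), the same Reynolds argument as in Step 1 turns it into $\frac{\D}{\dt}\frac12\int_\Omega G\tr\bfB$.
\item The relaxation term vanishes in $\Omega_S(t)$. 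In $\Omega_F(t)$ it equals $\frac12\alpha_F\int_{\Omega_F(t)}(\tr\bfB-d)$, which is $\frac12\|\alpha_F\bfB\|_{*,\Omega_F(t)}-\frac12\alpha_F d|\Omega_F(t)|$.
\end{itemize}
Adding this identity to the one from Step 1 gives exactly the claimed energy equality.

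\emph{Main obstacle.} The delicate point is justifying $D_t\rho=D_tG=0$, and the resulting transport identities, for coefficients that jump across the moving interface. I would handle it by splitting each integral over $\Omega_F(t)$ and $\Omega_S(t)$ and applying the Reynolds transport theorem on each subdomain separately. Both subdomains move with velocity $\bfv$, so the interface flux terms from the two sides cancel, using the continuity of $\bfv$ implicit in $\bfv\in H^1_0$. Because the velocity is divergence-free, no $\nabla\cdot\bfv$ terms are generated. The smoothness assumed in the statement is what makes this piecewise computation legitimate.
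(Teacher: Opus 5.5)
Your proposal is correct and is essentially the paper's proof. It uses the same test functions, the same cancellation of $\int_\Omega G\bfB:\bD(\bfv)$ through $\tr((\nabla\bfv)\bfB)=\tr(\bfB(\nabla\bfv)^\top)=\bfB:\nabla\bfv$, and the same splitting of the relaxation term. The only difference is that you absorb the inertia via $D_t\rho=0$, whereas the paper computes $\frac12\int_\Omega\rho\,\partial_t|\bfv|^2$ and the convective term separately and cancels the resulting interface integrals $\pm\frac{\rho_F-\rho_S}{2}\int_{\Gamma(t)}|\bfv|^2\bfv\cdot\bn_F$. Your bookkeeping is in fact more complete, since the paper's displays \eqref{seV}--\eqref{seB} leave the elastic work term and the $G_F$ contribution implicit.

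One small correction to your last paragraph: in the subdomain-wise Reynolds argument the flux terms cancel within each $\Omega_i(t)$, between the $\partial_t$ part and the convective part, because $\Omega_i(t)$ moves with $\bfv$. They do not cancel between the fluid and solid sides, and could not, since $\rho$, $G$ and possibly $\bfB$ jump across $\Gamma(t)$.
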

\begin{proof}
Let $\Gamma(t) := \partial\Omega_F(t)\cap\partial\Omega_S(t)$ denote the sharp interface and  $\bn_F$ denote the outward unit normal to $\partial\Omega_F(t)$.
We take $\bfz = \bfv$ in~\eqref{sw1} and $q=p$ in~\eqref{sw2}. Adding the equations together, we obtain
\begin{align}
 \frac12 \int_\Omega\rho\partial_t|\bfv|^2 
 + \int_\Omega  [(\bfv\cdot\nabla)\cdot (\rho \bfv)]\cdot\bfv
    + \int_\Omega 2 \mu |\bD(\bfv)|^2
    = \int_\Omega \rho \bff \cdot \bfv.
    \label{seV}
\end{align}
Using the Reynolds transport theorem,
we have
\begin{align*}
\frac12    \int_\Omega\rho\partial_t|\bfv|^2 
    &
    = \frac12 \frac{\D}{\dt}  \int_\Omega \rho |\bfv|^2
     - \frac{\rho_F - \rho_S}{2}\int_{\Gamma(t)}|\bfv|^2\bfv\cdot\bn_F,
\end{align*}
and the convective term simplifies as
\[
    \int_\Omega  [(\bfv\cdot\nabla)\cdot (\rho \bfv)]\cdot\bfv =  \frac{\rho_F - \rho_S}{2}\int_{\Gamma(t)}|\bfv|^2\bfv\cdot\bn_F.
\]

Next, we take the test function $\bfA = \mathbf{I}$ 
in~\eqref{sw3}. Using that
$
    \tr((\nabla\bfv)\bfB) = \tr(\bfB(\nabla\bfv)^\top) = \bfB:\nabla\bfv,
$
we obtain
\begin{align}\label{seB}
    \frac{\D}{\dt} \| G_S \bfB \|_{*,\Omega_S(t)} - 2 \int_{\Omega_S(t)} G_S \nabla \bfv : \bfB + \|\alpha_F  \bfB \|_{*,\Omega_F(t)} - \alpha_F d |\Omega_F(t)| = 0.
\end{align}
Multiplying~\eqref{seB} by $\frac12$, adding it to~\eqref{seV}, and noting that $\bb:\bD(\bfv) = \bb: \nabla \bfv$, we obtain the claimed energy estimate.
\end{proof}

\subsection{The diffuse interface model for FSI}

In this section, we present a diffuse interface model for the Eulerian FSI problem. We denote the diffuse equivalents of the variables previously introduced by a superscript ${\eps}$, where $\eps$ defines a length scale over which the interface is smeared out. 
Similarly to~\cite{Mokbel2018}, we define the diffuse interface FSI model as follows:
\begin{align}
     &   D^{\eps}_t \left( \rho(\phi) \bfv^{\eps} \right)   
        + \frac{\rho_F - \rho_S}{2} \left( M(\phi) \nabla m \cdot \nabla \right) \bfv^{\eps} 
        - \nabla \cdot ( 2 \mu(\phi) \bD(\bfv^{\eps}) + G(\phi) \bfB^{\eps} )
        \notag
        \\ 
        &
        \qquad
               + \nabla p^{\eps}
        = - \gamma \epsilon \nabla \cdot \left( \nabla \phi \otimes \nabla \phi \right)
        + \rho(\phi)\boldsymbol f 
        &
        \textrm{in} \; \Omega \times (0,T),
        \label{m1}
        \\
     &   \nabla \cdot \bfv^{\eps} = 0 
        &
        \textrm{in} \; \Omega \times (0,T),
        \label{m2}
                \\
     &   G(\phi) \left( D^{\eps}_t \boldsymbol B^{\eps} - (\nabla \bfv^{\eps}) \bfB^{\eps} - \bfB^{\eps} (\nabla \bfv^{\eps})^{\top} \right) 
        + \alpha(\phi) (\bfB^{\eps} - \mathbf{I})
        = 0
        &
        \textrm{in} \; \Omega \times (0,T),
        \label{m5}
        \\
 &       D^{\eps}_t \phi 
        = \nabla \cdot \left( M(\phi) \nabla m \right)
        &
        \textrm{in} \; \Omega \times (0,T),
        \label{m3}
        \\
    &    \frac12 G'(\phi)  \tr \bfB^{\eps} 
        + \gamma \left( \frac{1}{\epsilon} W'(\phi) - \eps \Delta \phi \right) 
        = m
        &
        \textrm{in} \; \Omega \times (0,T),
        \label{m4}
        \\
     &   \bfv^{\eps} = 0 
        &
        \textrm{on} \; \partial \Omega \times (0,T),
        \label{m6}
        \\
    &    \nabla \phi \cdot \boldsymbol n = 0
        &
        \textrm{on} \; \partial \Omega \times (0,T),
        \label{m7}
        \\
   &     \nabla m \cdot \boldsymbol n = 0
        &
        \textrm{on} \; \partial \Omega \times (0,T).
        \label{m8}
\end{align}
where $D_t^{\eps} = \partial_t + \bfv^{\eps} \cdot \nabla$ denotes the material derivative with respect to the diffuse velocity,
$\phi$ is the phase-field function, $M: [-1,1] \rightarrow [0, \infty)$ is a mobility function which might depend on 
the value of 
$\phi$, $\boldsymbol n$ is the outward-pointing unit normal to $\partial \Omega$, and $\gamma$ is the scaled surface tension, which is related to the physical surface tension $\tilde{\gamma}$ by $\gamma = \frac{3}{2 \sqrt{2}} \tilde{\gamma}$. The term $-\gamma \epsilon \nabla \cdot \left( \nabla \phi \otimes \nabla \phi \right)$ in~\eqref{m1} accounts for the capillary forces due to surface tension. The function $W$ is a free energy density. We set
\begin{align*}
    &\rho(\phi) := \rho_F (1+\phi)/2 + \rho_S (1-\phi)/2,
    \\
    &\mu(\phi) := \mu_F (1+\phi)/2 + \mu_S (1-\phi)/2,
    \\
    & G(\phi) := {G_F (1+\phi)/2} + G_S (1-\phi)/2,
    \\
    & \alpha(\phi) := \alpha_F (1 + \phi)/2.
\end{align*}
Initially, we assume that $\boldsymbol u = 0$ in $\Omega$, and we set $\phi=1$ in $\Omega_F$ and $\phi=-1$ in $\Omega_S$. The evolution of $\phi$ is then governed by~\eqref{m3}--\eqref{m4}.  
The diffuse interface model presented in this section is similar to the one used in~\cite{Mokbel2018}, which is based on a thermodynamically consistent derivation, the 
principal 
difference being that in our work a different constitutive model for the solid is used.

\begin{assumption}\label{assump:W}
We assume that $W\in C([-1,1]) \cap C^2((-1,1))$ and that $W'$ satisfies
\[
    \lim_{s \rightarrow -1} W'(s) = -\infty, \quad
    \lim_{s \rightarrow 1} W'(s) = \infty, \quad
    W''(s) \geq -\beta,
\]
for some 
$\beta \in \mathbb{R}$. For $x \notin [-1,1],$ we extend $W(x)$ by $+\infty$. Hence, $\int_{\Omega} W(\phi) < \infty$ implies $\phi(\bfx) \in [-1,1]$ for almost every $\bfx \in \Omega$.
\end{assumption}

\begin{remark}
As noted in~\cite{Abels2009}, Assumption~\ref{assump:W} is motivated by the free energy 
originally
suggested by Cahn and Hilliard~\cite{Cahn1958}:
\[
    W(\phi) = \frac{\theta}{2} \left((1+\phi) \ln(1+\phi) + (1-\phi)\ln(1-\phi)   \right) - \frac{\theta_{\phi}}{2} \phi^2,
\]
where $0<\theta <\theta_{\phi}$. It was shown in~\cite{Elliott1991} that using this free energy density, the associated Cahn--Hilliard equation has  a unique solution satisfying $\phi \in (-1,1)$ almost everywhere at all times. 
However, in the theory of the Cahn--Hilliard equations, this free energy is usually approximated by a suitable smooth free energy density. But due to the lack of a comparison principle for fourth order diffusion equations, one cannot ensure that the values of $\phi$ remain in $[-1,1]$. For further references, please see~\cite{Abels2007}. 
\end{remark}

\subsubsection{Weak formulation and energy estimates for the diffuse interface model}

To obtain the weak formulation, we multiply~\eqref{m1} by $\bfz^{\eps} \in H^1_0(\Omega)$,~\eqref{m2} by $q^{\eps} \in L^2_0(\Omega)$,~\eqref{m3} by $\kappa\in H^1(\Omega)$, and~\eqref{m4} by $\psi\in H^1(\Omega)$.
Furthermore, we 
contract~\eqref{m5} with $\bfA^\eps\in L^2(\Omega; \mathbb{R}^{d\times d}_{\rm sym})$,
add equations together, and integrate by parts. The  weak formulation is given as follows: seek $\bfv^{\eps} \in L^{\infty}(0,T;L^2(\Omega))\cap L^2(0,T;H^1_0(\Omega))\cap L^2(0,T;H^{-1}(\Omega))$, $p^{\eps}\in L^2(0,T;L^2_0(\Omega))$, and $\bfB^{\eps} \in L^2(0,T;M_{tr})$, $\|\bfB^{\eps}\|_*\in L^{\infty}(0,T)$ and $D^{\eps}_t \bfB^{\eps}\in L^2(0, T; L^2(\Omega; \mathbb{R}^{d\times d}_{\sym}))$ such that
for a.e.~$t\in (0, T)$,
\begin{equation}
    \begin{aligned}\label{eq:diffuse-weak-form}
        &\int_\Omega D_t^{\eps} (\rho(\phi)\bfv^\eps)\cdot\bfz^{\eps} 
        + \frac{\rho_F - \rho_S}{2} \int_\Omega \left( M(\phi) \nabla m \cdot \nabla \right) \bfv^{\eps}  \cdot \bfz^{\eps}
        + \int_\Omega 2\mu(\phi)\bD(\bfv^\eps):\bD(\bfz^{\eps})
        \\&
                + \int_\Omega G(\phi)\bfB^\eps: \nabla \bfz^{\eps}
        - \int_\Omega p^\eps \nabla \cdot \bfz^{\eps}
        + \int_\Omega q^\eps \nabla \cdot \bfv^\eps
        + \int_\Omega G(\phi)\partial_t \bfB^\eps: \bfA^{\eps}
                        + \int_{\Omega} G(\phi) \left((\bfv^{\eps} \cdot \nabla) \bfB^{\eps} \right): \bfA^{\eps}
        \\&
        - \int_\Omega G(\phi) (\nabla \bfv^{\eps})\bfB^{\eps}: \bfA^{\eps}
        - \int_\Omega G(\phi) \bfB^\eps(\nabla \bfv^{\eps})^\top: \bfA^{\eps}
        + \int_\Omega \alpha(\phi) ( \bb^{\eps} - \mathbf{I}): \bfA^{\eps}
                \\&
        = \gamma \epsilon \int_{\Omega} \nabla \phi \otimes \nabla \phi:\nabla\bfz^{\eps} 
        + \int_{\Omega} \rho(\phi)\boldsymbol f \cdot \bfz^{\eps}, 
    \end{aligned}
\end{equation}
for all $\bfz^\eps\in H^1_0(\Omega)^d, q^\eps\in L^2_0(\Omega)$, and $\bfA^\eps\in L^2(\Omega;\mathbb{R}^{d\times d}_{\rm sym})$.
Similarly, the Cahn--Hilliard problem in the weak form reads as:
seek $\phi\in L^{\infty}(0,T;H^1(\Omega))\cap H^1(0,T;(H^1(\Omega))^*)$, $m\in L^2(0,T;H^1(\Omega))$ such that
for a.e.~$t\in (0, T)$,
    \begin{align}
        \int_\Omega (D^\eps_t \phi) \kappa
        + \int_\Omega M(\phi)\nabla m \cdot\nabla \kappa
   +     \frac12\int_\Omega G'(\phi) \tr \bfB^{\eps} \psi
        + \frac{\gamma}{\eps}\int_\Omega W'(\phi) \psi
        + \gamma \eps\int_\Omega\nabla\phi\cdot\nabla \psi
        \notag
        \\
        = \int_\Omega m \psi,
       \label{eq:ch-weak-form}
    \end{align}
for all $\kappa, \psi\in L^2(0, T; H^1(\Omega))$.
Analogously to the sharp interface case, we can derive formal energy estimates for the diffuse interface model as follows.
\begin{theorem}
Let $(\bfv^{\eps}, p^{\eps}, \phi, m, \bfB^{\eps})$ be a sufficiently smooth solution of~\eqref{m1}--\eqref{m8}. Then, the following energy estimate holds:
\begin{align}
  \frac{\D}{\dt} \int_{\Omega} \frac{\rho(\phi)}{2} |\bfv^{\eps} |^2
  + \frac12 \frac{\D}{\dt} \|  G(\phi)  \tr \bfB^{\eps}  \|_{*,\Omega}
                +\frac{\gamma}{\eps} \frac{\D}{\dt} \int_\Omega W(\phi) 
                                  +  \frac{\gamma \eps}{2} \frac{\D}{\dt} \int_\Omega |\nabla\phi|^2
                                          + \int_\Omega 2\mu(\phi)|\bD(\bfv^{\eps}  )|^2
    \notag
    \\
            + \int_\Omega M(\phi) |\nabla m |^2
            +\frac{1}{2} \| \alpha(\phi)  \tr \bfB^{\eps} \|_{*,\Omega} 
    =     \frac{\alpha_F d}{4}  |\Omega| 
 +    \frac{\alpha_F d}{4}   \int_{\Omega} \phi_0  +  \int_{\Omega} \rho(\phi) \boldsymbol f.
\notag
\end{align}
\end{theorem}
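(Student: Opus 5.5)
We test the weak forms with the solution itself, exactly as for the sharp interface estimate. In \eqref{eq:diffuse-weak-form} we take $\bfz^\eps=\bfv^\eps$, $q^\eps=p^\eps$ and $\bfA^\eps=\frac12\mathbf I$. In \eqref{eq:ch-weak-form} we take $\kappa=m$ and $\psi=-\partial_t\phi$; equivalently, we use $D_t^\eps\phi$ as the test function and treat the convective part separately. Then we add everything and identify each group of terms as a time derivative or a dissipation.

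\textbf{Momentum part.} The pressure terms cancel. Incompressibility \eqref{m2} and $\bfv^\eps=0$ on $\partial\Omega$ give $\int_\Omega(\bfv^\eps\cdot\nabla)\rho\,|\bfv^\eps|^2/2$-type identities. Writing $D_t^\eps(\rho\bfv^\eps)\cdot\bfv^\eps=\frac12 D_t^\eps(\rho|\bfv^\eps|^2)+\frac12|\bfv^\eps|^2D_t^\eps\rho$, we get
\[
\int_\Omega D_t^\eps(\rho\bfv^\eps)\cdot\bfv^\eps=\frac{\D}{\dt}\int_\Omega\frac{\rho}{2}|\bfv^\eps|^2+\frac12\int_\Omega|\bfv^\eps|^2D_t^\eps\rho .
\]
By \eqref{m3}, $D_t^\eps\rho=\frac{\rho_F-\rho_S}{2}\nabla\cdot(M\nabla m)$. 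Integrating by parts with \eqref{m8} gives $-\frac{\rho_F-\rho_S}{2}\int_\Omega M\nabla m\cdot\nabla\frac{|\bfv^\eps|^2}{2}$, and this exactly cancels the extra term $\frac{\rho_F-\rho_S}{2}\int_\Omega(M\nabla m\cdot\nabla)\bfv^\eps\cdot\bfv^\eps$. This cancellation is the reason that term appears in the model.

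\textbf{Tensor part.} With $\bfA^\eps=\frac12\mathbf I$, the upper-convected terms give $-\int_\Omega G(\phi)\bfB^\eps:\nabla\bfv^\eps$, since $\tr((\nabla\bfv)\bfB)=\tr(\bfB(\nabla\bfv)^\top)=\bfB:\nabla\bfv$. This cancels the stress term $\int_\Omega G\bfB^\eps:\nabla\bfv^\eps$ from the momentum equation. The remaining terms are $\frac12\int_\Omega G(\phi)D_t^\eps\tr\bfB^\eps+\frac12\int_\Omega\alpha(\phi)\tr\bfB^\eps-\frac d2\int_\Omega\alpha(\phi)$. Now
\[
\tfrac12\int_\Omega G(\phi)D_t^\eps\tr\bfB^\eps=\tfrac12\tfrac{\D}{\dt}\int_\Omega G(\phi)\tr\bfB^\eps-\tfrac12\int_\Omega G'(\phi)\tr\bfB^\eps\,D_t^\eps\phi ,
\]
using $\nabla\cdot\bfv^\eps=0$ and the no-slip condition to convert material derivatives of integrals into time derivatives.

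\textbf{Cahn--Hilliard part.} Testing \eqref{m4} with $D_t^\eps\phi$ gives
\[
\int_\Omega mD_t^\eps\phi=\tfrac12\int_\Omega G'\tr\bfB^\eps D_t^\eps\phi+\tfrac\gamma\eps\tfrac{\D}{\dt}\int_\Omega W(\phi)+\gamma\eps\int_\Omega\nabla\phi\cdot\nabla D_t^\eps\phi .
\]
The first term cancels the leftover from the tensor part. Testing \eqref{m3} with $m$ gives $\int_\Omega mD_t^\eps\phi=-\int_\Omega M|\nabla m|^2$. For the gradient term, $\int_\Omega\nabla\phi\cdot\nabla(\bfv^\eps\cdot\nabla\phi)$ equals the capillary work $-\int_\Omega\nabla\phi\otimes\nabla\phi:\nabla\bfv^\eps$, up to $\int_\Omega\bfv^\eps\cdot\nabla\frac{|\nabla\phi|^2}{2}=0$. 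So $\gamma\eps\int_\Omega\nabla\phi\cdot\nabla D_t^\eps\phi=\frac{\gamma\eps}{2}\frac{\D}{\dt}\int_\Omega|\nabla\phi|^2+\gamma\eps\int_\Omega\nabla\phi\otimes\nabla\phi:\nabla\bfv^\eps$, and the last term cancels against the surface tension forcing on the right of \eqref{eq:diffuse-weak-form}.

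\textbf{Constant term.} What remains is $\frac d2\int_\Omega\alpha(\phi)=\frac{\alpha_Fd}{4}\bigl(|\Omega|+\int_\Omega\phi\bigr)$. Mass conservation of the Cahn--Hilliard equation (test \eqref{m3} with $\kappa=1$, using \eqref{m8} and $\nabla\cdot\bfv^\eps=0$) gives $\int_\Omega\phi=\int_\Omega\phi_0$, which yields the stated right-hand side together with the forcing term.

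The main obstacle is the bookkeeping of the three cross-couplings: the density-flux term, the $G'(\phi)\tr\bfB^\eps$ term, and the capillary term. In particular, the $G'$ cancellation needs the chemical potential tested against the material derivative $D_t^\eps\phi$ rather than $\partial_t\phi$, with the convective contributions vanishing by incompressibility and the boundary conditions. I would verify these identities first.
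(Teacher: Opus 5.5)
Your proof is correct and follows the paper's argument: the same test functions ($\bfz^\eps=\bfv^\eps$, $q^\eps=p^\eps$, $\kappa=m$, $\psi=D_t^\eps\phi$, $\bfA^\eps=\frac12\mathbf I$), the same three cancellations, and the same use of mass conservation. Your handling of the inertia via $D_t^\eps(\rho\bfv^\eps)\cdot\bfv^\eps=\frac12D_t^\eps(\rho|\bfv^\eps|^2)+\frac12|\bfv^\eps|^2D_t^\eps\rho$ is just a tidier packaging of the paper's $\mathcal T_1$/$\mathcal T_2$ splitting. Two sign slips should be fixed: in your first paragraph $\psi=-\partial_t\phi$ must read $\psi=D_t^\eps\phi$ (with the minus sign the $\int_\Omega m\,D_t^\eps\phi$ terms do not cancel), and $\int_\Omega\nabla\phi\cdot\nabla(\bfv^\eps\cdot\nabla\phi)=+\int_\Omega\nabla\phi\otimes\nabla\phi:\nabla\bfv^\eps$, as in your displayed formula, not minus.
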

\begin{proof}
To obtain the energy estimates, we take $\bfz^{\eps} = \bfv^{\eps}, q^{\eps} = p^{\eps}, \kappa = m, \psi= D_t^{\eps} \phi$, and $\bfA^\eps = \frac12\mathbf{I}$, 
and add together~\eqref{eq:diffuse-weak-form} and~\eqref{eq:ch-weak-form},
obtaining
\begin{align*}
    \int_\Omega \partial_t (\rho(\phi)\bfv^{\eps} )\cdot \bfv^{\eps}
    + \int_\Omega \left(\bfv^{\eps} \cdot \nabla ( \rho(\phi)\bfv^{\eps} ) \right) \cdot \bfv^{\eps}  
    + \frac{\rho_F - \rho_S}{2} \int_\Omega \left( M(\phi) \nabla m \cdot \nabla \right) \bfv^{\eps}  \cdot \bfv^{\eps} 
    + \int_\Omega 2\mu(\phi)|\bD(\bfv^{\eps} )|^2
    \notag
    \\
    + \int_{\Omega} G(\phi) \bfB^{\eps} : \nabla \bfv^{\eps}
    + \int_\Omega M(\phi) |\nabla m |^2
    + \frac12 \int_\Omega G'(\phi) \tr \bfB^{\eps} D^{\eps}_t \phi 
    + \frac{\gamma}{\eps} \int_\Omega D^{\eps} _t W(\phi) 
    + \gamma \eps \int_\Omega \nabla\phi \cdot \nabla D^{\eps}_t  \phi
    \notag
    \\
    + \frac12 \int_\Omega G(\phi)D^{\eps}_t (\tr\bfB^{\eps}  ) 
    - \int_{\Omega} G(\phi) \nabla \bfv^{\eps} : \bfB^{\eps}
    + \frac12\int_\Omega \alpha(\phi) ( \tr \bb^{\eps} - d) 
    \notag
    \\
    = \gamma \eps \int_\Omega \nabla \phi \otimes \nabla \phi:\nabla \bfv^{\eps}
    + \int_{\Omega} \rho(\phi) \boldsymbol f\cdot \bfv^{\eps}.
\end{align*}

Using
\begin{align*}
    \int_{\Omega} \partial_t \left( \rho(\phi) \bfv^{\eps}  \right)   \cdot \bfv^{\eps}  = 
    \int_{\Omega} \partial_t \left( \rho(\phi)\right) |\bfv^{\eps} |^2 + \int_{\Omega} \partial_t \bfv^{\eps} \cdot \rho(\phi) \bfv^{\eps}  
    =
    \int_{\Omega} \partial_t \left( \frac{\rho(\phi)}{2} |\bfv^{\eps} |^2 \right)+ \int_{\Omega} \partial_t \left(\rho(\phi)\right) \frac{|\bfv^{\eps} |^2}{2},
\end{align*}
and noting that
\begin{align*}
\partial_t \left( \rho(\phi) \right) = \rho'(\phi) \partial_t \phi = \frac{\rho_F - \rho_S}{2}\partial_t\phi,
\end{align*}
by employing~\eqref{m3}, we can write 
\begin{align*}
       \int_\Omega \partial_t (\rho(\phi)\bfv^{\eps}  )\cdot \bfv^{\eps}  =
            \frac{\D}{\dt} \int_{\Omega}
        \frac{\rho(\phi)}{2} |\bfv^{\eps} |^2
        \underbrace{+ \frac{\rho_F - \rho_S}{4} \int_{\Omega}  \nabla \cdot \left( M(\phi) \nabla m \right) |\bfv^{\eps} |^2}_{\mathcal{T}_1}
        -\underbrace{\frac{\rho_F - \rho_S}{4} \int_{\Omega}  (\bfv^{\eps} \cdot \  \nabla \phi) |\bfv^{\eps} |^2}_{\mathcal{T}_2}.
\end{align*}
For $\mathcal{T}_1$, after integration by parts, we have
\begin{align*}
\mathcal{T}_1
 = -  \frac{\rho_F - \rho_S}{2} \int_{\Omega}  \left( \left( M(\phi) \nabla m  \cdot \nabla\right)  \bfv^{\eps}  \right) \cdot \bfv^{\eps} .
\end{align*}
For integral $\mathcal{T}_2$, using $\displaystyle\frac{\rho_F-\rho_S}{2} \nabla \phi = \nabla \rho(\phi)$ and integrating by parts, we obtain
\begin{align*}
\mathcal{T}_2 &
=
-\frac{ 1}{2}  \int_{\Omega} \bfv^{\eps}  \cdot \nabla \rho(\phi)  |\bfv^{\eps} |^2 
\notag
\\ 
&
=\frac{ 1}{2} \int_{\Omega} \nabla \cdot \bfv^{\eps}   \rho(\phi)   |\bfv^{\eps} |^2
-\frac{ 1}{2} \int_{\Omega} \nabla \cdot (\bfv^{\eps}   \rho(\phi) )  |\bfv^{\eps} |^2
\notag
\\
&
=
\frac{ 1}{2} \int_{\Omega} \rho(\phi) \bfv^{\eps}  \cdot \nabla|\bfv^{\eps} |^2
-\frac{ 1}{2} \int_{\partial \Omega}  \rho(\phi)   |\bfv^{\eps} |^2 \bfv^{\eps}   \cdot \boldsymbol{n}
\notag
\\
&
=
 \int_{\Omega} \rho(\phi)\left(  (\bfv^{\eps}  \cdot \nabla) \bfv^{\eps}  \right) \cdot \bfv^{\eps} . 
\end{align*} 
For the convective term, we have
\begin{align}
\int_{\Omega} (\bfv^{\eps}  \cdot \nabla) (\rho(\phi)\bfv^{\eps} ) \cdot \bfv^{\eps}  &
=
\sum_{i,j}  \int_\Omega v^{\eps}_j  \frac{\partial (\rho(\phi) v^{\eps}_i )}{\partial x_j}  v^{\eps}_i
\notag
\\
&
=
-  \sum_{i,j}  \int_\Omega \frac{\partial ( v^{\eps}_j  v^{\eps}_i)}{\partial x_j}  \rho(\phi) v^{\eps}_i 
\notag
\\
&
=
-  \int_\Omega \nabla \cdot \bfv^{\eps}  \rho(\phi) |\bfv^{\eps} |^2
-    \int_\Omega  \rho(\phi) (\bfv^{\eps}  \cdot \nabla )  \bfv^{\eps}   \cdot \bfv^{\eps} 
\notag
\\
&
=
-    \int_\Omega  \rho(\phi) \left( (\bfv^{\eps}  \cdot \nabla )  \bfv^{\eps}  \right)  \cdot \bfv^{\eps} .
\label{convective}
\end{align}
Using integration by parts and $\nabla \cdot \bfv^{\eps} = 0$, we obtain the following equality:
\begin{align*}
\frac{\gamma}{\eps}  \int_\Omega D^{\eps}_t W(\phi) 
&=
\frac{\gamma}{\eps}  \int_\Omega \partial_t W(\phi)
+\frac{\gamma}{\eps}  \int_\Omega {\bfv^{\eps} } \cdot \nabla (W(\phi))
\\
&=
\frac{\gamma}{\eps} \frac{\D}{\dt} \int_\Omega  W(\phi)
-\frac{\gamma}{\eps}  \int_\Omega \nabla \cdot {\bfv^{\eps} } W(\phi)
\\
&=
\frac{\gamma}{\eps} \frac{\D}{\dt} \int_\Omega  W(\phi).
\end{align*}
Furthermore, by integration by parts, we have
\begin{align*}
\gamma \eps \int_\Omega \nabla\phi \cdot \nabla D^{\eps}_t  \phi
& =
\gamma \eps \int_\Omega \nabla\phi \cdot \nabla \partial_t  \phi
+
\gamma \eps \int_\Omega \nabla\phi \cdot \nabla \left( \bfv^{\eps}  \cdot \nabla\phi \right)
\\
& =
\frac{\gamma \eps}{2} \frac{\D}{\dt} \int_\Omega |\nabla\phi|^2
-
\gamma \eps \int_\Omega \Delta \phi  \left(\nabla\phi \cdot  \bfv^{\eps}    \right)
\\
& =
\frac{\gamma \eps}{2} \frac{\D}{\dt} \int_\Omega |\nabla\phi|^2
-
\gamma \eps \int_\Omega \nabla \cdot   \left(\nabla\phi \otimes \nabla \phi \right) \cdot \bfv^{\eps}  
+
\gamma \eps \int_\Omega \nabla   \left(\frac12 |\nabla\phi |^2 \right) \cdot \bfv^{\eps}  
\\
& =
\frac{\gamma \eps}{2} \frac{\D}{\dt} \int_\Omega |\nabla\phi|^2
-
\gamma \eps \int_\Omega \nabla \cdot   \left(\nabla\phi \otimes \nabla \phi \right) \cdot \bfv^{\eps}  
-
\frac{\gamma \eps}{2} \int_\Omega   |\nabla\phi |^2  \nabla \cdot \bfv^{\eps}  
\\
& =
\frac{\gamma \eps}{2} \frac{\D}{\dt} \int_\Omega |\nabla\phi|^2
-
\gamma \eps \int_\Omega \nabla \cdot   \left(\nabla\phi \otimes \nabla \phi \right) \cdot \bfv^{\eps} .
\end{align*}
To handle the terms arising from the upper convected derivative of $\bfB^{\eps}$, we note that the following holds:
\begin{align*}
  \int_{\Omega}  G(\phi)  \partial_t \tr \bfB^{\eps} 
  &
  =  \frac{\D}{\dt}\int_{\Omega}  G(\phi)  \tr \bfB^{\eps}  
  - \int_{\Omega}  \partial_t \left(G(\phi) \right)  \tr \bfB^{\eps}  
  \\
    &
  =  \frac{\D}{\dt}\int_{\Omega}  G(\phi)  \tr \bfB^{\eps}  
  - \int_{\Omega}  G'(\phi) \partial_t \phi  \tr \bfB^{\eps},
\end{align*}
and
\begin{align*}
\int_{\Omega} G(\phi) (\bfv^{\eps}  \cdot \nabla) \tr \bfB^{\eps}   
&= -  \int_{\Omega} \nabla G(\phi) \cdot  \bfv^{\eps}  (\tr \bfB^{\eps} ) - \int_{\Omega} G(\phi)  \nabla \cdot \bfv^{\eps}   \tr \bfB^{\eps} 
 \notag
 \\
&= -  \int_{\Omega} G'(\phi)\nabla \phi \cdot  \bfv^{\eps}  (\tr \bfB^{\eps} ),
\end{align*}
so that 
\begin{align*}
 \frac12 \int_\Omega G(\phi)D^{\eps}_t (\tr\bfB^{\eps}  ) 
 = \frac12 \frac{\D}{\dt}\int_{\Omega}  G(\phi)  \tr \bfB^{\eps}  
-\frac12 \int_\Omega G'(\phi)D^{\eps}_t \phi \tr\bfB^{\eps}.
\end{align*}
Integrating~\eqref{m3} over $\Omega$, integrating by parts the convective term, and using the divergence theorem, we have
\begin{align*}
    0
    &= \int_{\Omega} \partial_t \phi
    + \int_{\Omega} \bfv^{\eps} \cdot \nabla \phi
    + \int_{\partial \Omega}  M(\phi) \nabla m \cdot \bn
    = \frac{\D}{\dt}\int_\Omega\phi.
\end{align*}
Therefore, using that $\int_\Omega\phi = \int_\Omega\phi_0$ for  all $t$, we have
\begin{align*}
\frac12 \int_{\Omega} \alpha(\phi)  \tr (\bfB^{\eps} - \mathbf{I})
&=  \frac{1}{2} \| \alpha(\phi)  \tr \bfB^{\eps} \|_{*,\Omega}
- \frac{\alpha_F d}{4} \int_{\Omega}(1+\phi)  
\\
&=  \frac{1}{2} \| \alpha(\phi)  \tr \bfB^{\eps} \|_{*,\Omega}
- \frac{\alpha_F d}{4}  |\Omega| 
- \frac{\alpha_F d}{4}   \int_{\Omega} \phi_0.
\end{align*}
Collecting the estimates above and using $\nabla \bfv^{\eps} : \bfB^{\eps} + \bfB^{\eps}: (\nabla \bfv^{\eps})^{\top} = 2 \bfB^{\eps} :  \nabla \bfv^{\eps},$ we obtain the  claimed energy estimate.
\end{proof}

\section{Modeling error analysis}\label{modelingerror}

We start this section with stating the additional assumptions needed for the modeling error analysis. We assume  $\rho_F = \rho_S=\rho$. We also define the following quantities that we assume are bounded independently of $\eps$:
\begin{align}\nonumber
	\Lambda(t):=& {  2 C_K  \|\bD(\bfv^{\eps})\|_{L^\infty(\Omega)^{d\times d}}} +\frac{C_K^2}{2\delta_1}+\frac{8\delta_2}{G_*}+\frac{2\delta_3}{\rho}
	+\frac{1}{G_*}\|D_t^{\eps} G(\phi)\|_{L^\infty(\Omega)}+\frac{G_S}{\delta_3}\|\nabla\bfB \|^2_{L^{\infty}(\Omega)^{{d\times d\times d}}}
	\\&
	+\frac{1}{G_*}\Big (2\delta_2\|\nabla\bfv\|^2_{L^{\infty}(\Omega)^{{d\times d}}}
	+ 4 \|G(\phi)\|^2_{L^{\infty}{(\Omega)}}  \| \nabla \bfv^{\eps} \|^2_{L^{\infty}(\Omega)^{{d\times d}}} 
	\notag
	\\
	&+\frac{2C_K^2}{ \delta_1}\|G(\phi)\|^2_{L^{\infty}{(\Omega)}} \|\bb \|^2_{L^{\infty}(\Omega)^{{d\times d}}} \Big ),
	\label{GronwalLambda}
\end{align}
\begin{align}\label{PFConstant}
 F=\max_{t\in [0,T]}\left (\frac{\alpha_F^2}{16\delta_2}+\frac{1}{\delta_1} \|\bD(\bfv(t)) \|^2_{L^{\infty}(\Omega)^{{d\times d}}}+ \left(\frac{C_K^2}{4\delta_1}+\frac{1}{\delta_2}+\frac{\alpha_F^2}{16\delta_2}\right)\|\bb(t)\|^2_{L^\infty(\Omega)^{{d\times d}}}\right .
 \\\nonumber
 \left .
 +\frac{1}{4\delta_2}\| \partial_t  \bfB(t) \|^2_{L^{\infty}(\Omega)^{{d\times d}}} 
 +\frac{1}{4 \delta_2}\|\nabla\bfB \|^2_{L^{\infty}(\Omega)^{{d\times d\times d}}}\|\bfv\|^2_{L^{\infty}(\Omega)^d}\right ).
\end{align}
The function $\Lambda(t)$ will play the role of a Gr\"onwal coefficient in the modeling error estimate, while $F$ is a constant multiplying the phase-field error.
We  introduce the following notation for the minimum values of the material parameters:
$$
\mu_* := \min\{\mu_F,\mu_S\},\quad G_*=\min\{G_F,G_S\},
$$
and define the energy of the modeling error:
\begin{equation}\label{eq:def-modelling-error-energy}
\cE(t)
:= \frac{\rho}{2}\|\bfv(t)-\bfv^\eps(t)\|_{L^2(\Omega)^d}^2
+ \frac{G_*}{2}\int_\Omega |\bfB(t)-\bfB^\eps(t)|^2.
\end{equation}
Now we are in a position to state the main theorem of this section.

\begin{theorem}\label{thm:modelling-error}
Assume that $(\bfv, p, \bb)$ is a sufficiently regular solution of the sharp interface problem~\eqref{s1}--\eqref{s4} and $(\bfv^\eps, p^\eps, \bb^\eps, \phi)$ is a sufficiently regular solution of the diffuse interface problem~\eqref{m1}--\eqref{m8}, both defined on the time interval $[0,T]$.
Assume further that $\Lambda \in L^1(0,T)$, where $\Lambda$ is defined in~\eqref{GronwalLambda}, and that the constant $F$ defined in~\eqref{PFConstant} is finite.
Then, for every $t\in [0,T]$, the following modeling error estimate holds:
\begin{align}\notag
    \cE(t)&+
  \int_0^t\int_\Omega 2\mu(\phi)|\bD(\bfv - \bfv^\eps)|^2
  + \int_0^t\int_\Omega \alpha(\phi)|\bfB-\bfB^{\eps}|^2 
  \\ \label{modelling-error-estimate}
  &\leq \exp\left (\int_0^t \Lambda(s) ds\right )\cE(0)
  + F\int_0^t \exp\left (\int_s^t \Lambda(\tau) d\tau\right )\|\phi(s)-(\mathbbm{1}_F - \mathbbm{1}_S)(s)\|^2_{L^2(\Omega)} ds
  \\\notag
  &+ \gamma^2\eps^2\frac{C_K^2}{\delta}\int_0^t \exp\left (\int_s^t \Lambda(\tau) d\tau\right )\|\nabla\phi(s)\|^4_{L^4(\Omega)} ds.
\end{align}
\end{theorem}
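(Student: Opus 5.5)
The plan is a relative-energy (error) argument. Set $\bfe:=\bfv-\bfv^\eps$ and $\bfE:=\bfB-\bfB^\eps$. The key observation is that, with $\rho_F=\rho_S=\rho$, the sharp coefficients can be written through the indicator: $\mu=\mu(\chi)$, $G=G(\chi)$ and $\alpha=\alpha(\chi)$ with $\chi:=\mathbbm{1}_F-\mathbbm{1}_S$. Since all these coefficients are affine in their argument, every coefficient mismatch is bounded pointwise by a constant times $|\phi-\chi|$.

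I would carry out the following steps in order.

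\textbf{Momentum error equation.} Subtract the diffuse momentum equation~\eqref{eq:diffuse-weak-form} from the sharp one~\eqref{sw1}, and test with $\bfz=\bfe$. The $\rho_F-\rho_S$ mobility term disappears because the densities are equal. The pressures drop out since $\nabla\cdot\bfe=0$. For the convective difference I write
\[
(\bfv\cdot\nabla)\bfv-(\bfv^\eps\cdot\nabla)\bfv^\eps=(\bfe\cdot\nabla)\bfv+(\bfv^\eps\cdot\nabla)\bfe .
\]
The second piece vanishes against $\bfe$ by skew-symmetry. The first is bounded using $\|\nabla\bfv\|_{L^\infty}$, or alternatively (as the form of $\Lambda$ suggests) rewritten so that $\bD(\bfv^\eps)$ appears, at the cost of a Korn constant $C_K$.

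For the viscous term I split
\[
2\mu\bD(\bfv)-2\mu(\phi)\bD(\bfv^\eps)=2\mu(\phi)\bD(\bfe)+2(\mu-\mu(\phi))\bD(\bfv).
\]
This yields the dissipation $\int 2\mu(\phi)|\bD(\bfe)|^2$ plus a remainder controlled by $\|\bD(\bfv)\|_{L^\infty}|\phi-\chi|$, which Young's inequality with $\delta_1$ turns into a phase-field error term plus an absorbed part. The surface-tension term $\gamma\eps\int\nabla\phi\otimes\nabla\phi:\nabla\bfe$ is bounded by $\gamma\eps\|\nabla\phi\|_{L^4}^2\|\nabla\bfe\|_{L^2}$. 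Korn and Young then give the term $\gamma^2\eps^2 C_K^2\delta^{-1}\|\nabla\phi\|_{L^4}^4$ plus a small multiple of the dissipation.

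\textbf{Constitutive error equation.} Subtract~\eqref{m5} from~\eqref{s3} and contract with $\bfE$. The $G\bfB$ coupling terms are treated with the splitting
\[
G\bfB-G(\phi)\bfB^\eps=G(\phi)\bfE+(G-G(\phi))\bfB .
\]
The term $\int G(\phi)\bfE:\nabla\bfe$ in the momentum equation should partially cancel the corresponding upper-convected contribution $-\int G(\phi)\bigl((\nabla\bfe)\bfB^\eps+\dots\bigr):\bfE$. Whatever does not cancel (the products with $\bfB$, $\bfB^\eps$ and $\nabla\bfv^\eps$) is bounded with $L^\infty$ norms times $\|\bfE\|\,\|\nabla\bfe\|$ and absorbed through $\delta_1$ and Korn. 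This is the origin of the $\|G(\phi)\|_{L^\infty}^2\|\bfB\|_{L^\infty}^2$ and $\|\nabla\bfv^\eps\|_{L^\infty}^2$ terms in $\Lambda$.

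For the time derivative I use
\[
\int G(\phi)D_t^\eps\bfE:\bfE=\tfrac12\tfrac{\D}{\dt}\int G(\phi)|\bfE|^2-\tfrac12\int D_t^\eps G(\phi)|\bfE|^2 ,
\]
using incompressibility. Since $G(\phi)\ge G_*$, the first piece controls $\frac{G_*}{2}\|\bfE\|^2$, and the second is the $\|D_t^\eps G(\phi)\|_{L^\infty}/G_*$ entry of $\Lambda$.

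Two sources of trouble remain. First, the material derivatives differ: $D_t\bfB-D_t^\eps\bfB=(\bfe\cdot\nabla)\bfB$, which gives the $\|\nabla\bfB\|_{L^\infty}$ terms with $\delta_3$. Second, the sharp equation carries coefficient $G$ on $\partial_t\bfB$ and $\nabla\bfv$, whereas the diffuse one carries $G(\phi)$. Their difference times $(\partial_t\bfB,\ \nabla\bfB\,\bfv,\ \bfB)$ gives the $\|\partial_t\bfB\|$, $\|\nabla\bfB\|\|\bfv\|$ and $\|\bfB\|$ contributions to $F$ via $\delta_2$. The relaxation term splits similarly as $\alpha(\phi)|\bfE|^2+(\alpha-\alpha(\phi))(\bfB-\mathbf I):\bfE$, producing the $\alpha_F^2/(16\delta_2)$ terms. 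There is one subtlety here: the energy $\cE$ has weight $G_*$ while the equation has weight $G(\phi)$. I would therefore divide the $\bfB$ equation by $G(\phi)$ before testing, or equivalently compare $\frac12\int G(\phi)|\bfE|^2\ge \frac{G_*}{2}\|\bfE\|^2$ only at the end.

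\textbf{Closing.} Add the two error identities, choose the $\delta_i$ small enough to absorb all $\|\bD(\bfe)\|$ contributions into the dissipation (Korn with constant $C_K$), and obtain
\[
\frac{\D}{\dt}\cE+\text{dissipation}\le\Lambda\cE+F\|\phi-\chi\|_{L^2}^2+\gamma^2\eps^2\frac{C_K^2}{\delta}\|\nabla\phi\|_{L^4}^4 .
\]
Gr\"onwall's inequality then gives~\eqref{modelling-error-estimate}. I expect the main obstacle to be the nonlinear upper-convected terms, where the products $(\nabla\bfv)\bfB-(\nabla\bfv^\eps)\bfB^\eps$ must be split so that every piece is either absorbed or has an $L^\infty$ coefficient listed in $\Lambda$. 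The split I would use first is
\[
(\nabla\bfe)\bfB+(\nabla\bfv^\eps)\bfE .
\]
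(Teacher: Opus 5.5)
Your plan coincides with the paper's proof: subtract the weak forms, test with $\bfz=\bfv-\bfv^\eps$, $q=p-p^\eps$, $\bfA=\bfB-\bfB^\eps$, split every coefficient mismatch against the sharp quantities (including the same three-term split of the upper-convected products), absorb via Young with $\delta_1,\delta_2,\delta_3$ and Korn, and close with Gr\"onwall. The only cosmetic difference is that the paper splits the convective term as $((\bfv-\bfv^\eps)\cdot\nabla)\bfv^\eps+(\bfv\cdot\nabla)(\bfv-\bfv^\eps)$, which is why $\|\bD(\bfv^\eps)\|_{L^\infty}$ rather than $\|\bD(\bfv)\|_{L^\infty}$ appears in $\Lambda$; also, no cancellation actually occurs between $\int_\Omega G(\phi)(\bfB-\bfB^\eps):\nabla(\bfv-\bfv^\eps)$ and the upper-convected error terms (that cancellation is specific to testing with $\frac12\mathbf{I}$ in the energy identity), so these are simply bounded and absorbed as your fallback says, and your remark on the $G(\phi)$-weighted versus $G_*$-weighted energy addresses a step the paper passes over when it writes $\frac{\D}{\dt}\cE$.
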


\begin{remark}
    Theorem~\ref{thm:modelling-error} shows that the modeling error measured in the energy norm defined in~\eqref{eq:def-modelling-error-energy} can be controlled by the initial modeling error, the phase-field approximation error, and a term of the order of $\eps^2$ depending on the gradient of the phase-field. In particular, if the phase-field $\phi$ converges to the characteristic function of the fluid domain in $L^\infty(0,T;L^2(\Omega))$ as $\eps\to 0$, then the modeling error converges to zero as $\eps\to 0$.
\end{remark}

\begin{remark}
    The assumption that $\rho_F = \rho_S$ is made to avoid technical complications arising from the convective term in the momentum equation. We expect that with additional effort, this assumption can be removed. Moreover, notice that the addition of a small artificial elasticity in the fluid region was necessary since $G_*$ appears in the denominator of several terms in~\eqref{GronwalLambda}. We believe that this is an artefact of the analysis, which we confirm numerically in Section~\ref{sec:results}.
\end{remark}

\begin{remark}
    Theorem~\ref{thm:modelling-error} requires sufficient regularity of the sharp interface solution, which is a standard assumption in the numerical analysis of diffuse interface models. Additionally, we assume that $D^\eps_t\phi$ and $\nabla\bfv^{\eps}$ are bounded in $L^1(0,T; L^\infty(\Omega))$. These regularity and uniform boundedness requirements on the diffuse interface solution are non-standard and more difficult to justify theoretically. We verify these assumptions numerically by demonstrating that these quantities remain bounded in our computational experiments. 
\end{remark}

\begin{proof}
To obtain the modeling error equations, we subtract the diffuse interface weak form~\eqref{eq:diffuse-weak-form} from the sharp interface weak form~\eqref{eq:sharp-weak-form}, which gives rise to the following error equation:
\begin{align}
        &\int_\Omega \rho \left(   D_t(\bfv) - D_t^{\eps} \bfv^\eps \right) \cdot\bfz 
        + \int_\Omega 2(  \mu\bD(\bfv) - \mu(\phi)\bD(\bfv^\eps)):\bD(\bfz)
        + \int_\Omega ( G \bfB -  G(\phi)\bfB^\eps):\nabla \bfz
        \notag
        \\&
           - \int_\Omega (p - p^{\eps})\nabla\cdot\bfz
        + \int_\Omega q\nabla\cdot( \bfv - \bfv^{\eps} )
    + \int_\Omega (  GD_t\bb - G(\phi)D^\eps_t\bb^\eps ):\bfA
\notag
\\
      &  
          - \int_\Omega (G(\nabla\bfv)\bb - G(\phi)(\nabla\bfv^\eps)\bb^\eps):\bfA
       - \int_\Omega (G\bb(\nabla\bfv)^\top - G(\phi)\bb^\eps(\nabla\bfv^\eps)^\top):\bfA
       \notag
       \\
       & 
              + \int_\Omega (\alpha\bb- \alpha(\phi)\bb^\eps - (\alpha - \alpha(\phi))\mathbf{I}):\bfA
              = \int_\Omega( \rho -\rho(\phi))\bff\cdot\bfz     
       - \gamma\eps\int_\Omega(\nabla\phi\otimes\nabla\phi):\nabla \bfz,
       \label{eq:modelling-error-eqns}
\end{align}
for all test functions $\bfz\in H^1_0(\Omega)^d, q\in L^2_0(\Omega), \bfA\in L^2(\Omega;\mathbb{R}^{d\times d}_{\rm sym})$.

Let $\bfz = \bfv - \bfv^\eps, q = p - p^\eps$, 
and $\bfA = \bfB -\bfB^\eps $ 
in~\eqref{eq:modelling-error-eqns}.
Then the error equation becomes
\begin{align*}
\sum_{i=1}^ {10} \mathcal{I}_i = \mathcal{R }
\end{align*}
where
\begin{align*}
\mathcal{I}_1 &=   \rho\int_\Omega \partial_t (\bfv - \bfv^\eps)\cdot(\bfv - \bfv^\eps) = \frac{\rho}{2}\frac{\D}{\dt}\|\bfv - \bfv^\eps\|^2_{L^2(\Omega)^d},
\\
\mathcal{I}_2 &=   \rho\int_\Omega [(\bfv\cdot\nabla)\bfv]\cdot(\bfv - \bfv^\eps)
    - \rho\int_\Omega [(\bfv^\eps\cdot\nabla)\bfv^\eps]\cdot(\bfv - \bfv^\eps),
    \\
\mathcal{I}_3 &=\int_\Omega 2(\mu\bD(\bfv) - \mu(\phi)\bD(\bfv^\eps)):\bD(\bfv - \bfv^\eps),
    \\
\mathcal{I}_4 &= \int_\Omega (G \bfB - G(\phi)\bfB^\eps):\nabla(\bfv - \bfv^\eps),
\\
\mathcal{I}_5 &=  \int_\Omega(G \partial_t  \bfB - G(\phi)\partial_t \bfB^\eps):(\bfB-\bfB^{\eps}),  
\\
\mathcal{I}_6 &=   \int_\Omega \left(G (\bfv  \cdot \nabla) \bfB - G(\phi) (\bfv^{\eps}  \cdot \nabla) \bfB^\eps \right ):(\bfB-\bfB^{\eps}),
    \\
\mathcal{I}_7 &=
    - \int_\Omega (G(\nabla\bfv)\bb - G(\phi)(\nabla\bfv^\eps)\bb^\eps):(\bfB-\bfB^{\eps}),
    \\
\mathcal{I}_8 &=    - \int_\Omega (G\bb(\nabla\bfv)^\top - G(\phi)\bb^\eps(\nabla\bfv^\eps)^\top):(\bfB-\bfB^{\eps}),
    \\
\mathcal{I}_9 &= \int_\Omega (\alpha \bfB - \alpha(\phi) \bfB^\eps):(\bfB-\bfB^{\eps}),
\\
\mathcal{I}_{10} &=
    - \int_\Omega  (\alpha - \alpha(\phi) \mathbf{I}):(\bfB-\bfB^{\eps}),
     \\
\mathcal{R} &=  - \gamma\eps\int_\Omega(\nabla\phi\otimes\nabla\phi):\nabla(\bfv - \bfv^\eps).
\end{align*}
Using the identity $\int_\Omega [(\bfv\cdot\nabla)\bfz]\cdot\bfz = 0$ for all $\bfz, \bfv \in H^1_0(\Omega)^d$ with $\nabla\cdot\bfv = 0$,  and the Korn inequality with constant $C_K$, we have
\begin{align*}
   \mathcal{I}_2
& =\rho\int_{\Omega}\left( (\bfv-\bfv^\eps)\cdot\nabla\bfv^{\eps}+\bfv\cdot\nabla(\bfv - \bfv^\eps)\right) \cdot (\bfv - \bfv^\eps)
    \\
    &=\rho\int_{\Omega}\left( (\bfv-\bfv^\eps)\cdot\nabla\bfv^{\eps}\right) \cdot (\bfv - \bfv^\eps)
    \\
        &
   \leq { \rho C_K \|\bfv - \bfv^\eps\|^2_{L^2(\Omega )^d}\|\bD(\bfv^{\eps})\|_{L^\infty(\Omega)^{d\times d}}. }
\end{align*}
The fluid viscous term can be decomposed as follows:
\begin{align*}
   \mathcal{I}_3 &=
     \int_\Omega 2(\mu - \mu(\phi))\bD(\bfv):\bD(\bfv - \bfv^\eps)
    + 
     2\int_\Omega \mu(\phi)|\bD(\bfv - \bfv^\eps)|^2,
 \end{align*}
 where the first term can be bounded as    
\begin{align*}
      \int_\Omega 2(\mu - \mu(\phi))\bD(\bfv):\bD(\bfv - \bfv^\eps) & \leq 2  \|\bD(\bfv) \|_{L^{\infty}(\Omega)^{{d\times d}}} \|\mu - \mu(\phi)\|_{L^2(\Omega)} \|\bD(\bfv - \bfv^\eps)\|_{L^2(\Omega)^{{d\times d}}}
      \\
      & \leq 
      \frac{1}{\delta_1}  \|\bD(\bfv) \|^2_{L^{\infty}(\Omega)^{{d\times d}}} \|\mu - \mu(\phi)\|^2_{L^2(\Omega)}   +  \delta_1 \|\bD(\bfv - \bfv^\eps)\|^2_{L^2(\Omega)^{d\times d}}.
\end{align*}
Term $\mathcal{I}_4$ can be bounded as
\begin{align*}
        \mathcal{I}_4
        &=
        \int_\Omega (G  - G(\phi))\bfB:\nabla(\bfv - \bfv^\eps)
        + 
        \int_\Omega G(\phi)(\bfB - \bfB^\eps):\nabla(\bfv - \bfv^\eps)
        \\
        &\leq
        \frac{C_K^2}{4\delta_1} \|\bb\|^2_{L^\infty(\Omega)^{d \times d}} \|G -G(\phi) \|^2_{L^2(\Omega)^{d \times d}}
        +2 \delta_1 \|\bD(\bfv - \bfv^\eps)\|^2_{L^2(\Omega)^{d\times d}}
        \\&
        + \frac{C_K^2}{4\delta_1} \|G( \phi) \|^2_{L^\infty(\Omega)}\|\bb - \bb^\eps\|^2_{L^2(\Omega)^{d\times d}}
    \end{align*}
 by using the Cauchy-Schwarz, Young's inequality with $\delta_1>0$, and the Korn inequality with constant $C_K$.

Next, we have
\begin{align*}
    \mathcal{I}_5 & = 
    \int_\Omega (G - G(\phi))\partial_t  \bfB:(\bfB-\bfB^{\eps}) 
  +  \frac12  \int_\Omega G(\phi) \frac{d}{dt}  |\bfB-\bfB^{\eps}|^2
    \\
    &=
   \int_\Omega (G - G(\phi))\partial_t  \bfB:(\bfB-\bfB^{\eps}) 
  +  \frac12   \frac{\D}{\dt}  \int_\Omega G(\phi)  |\bfB-\bfB^{\eps}|^2
 -   \frac 1 2 \int_\Omega  \partial_t G(\phi)   |\bfB-\bfB^{\eps}|^2,
\end{align*}
where using the Cauchy-Schwarz and Young's inequality with $\delta_2>0$, we can bound 
\begin{align*}
  \int_\Omega (G - G(\phi))\partial_t  \bfB:(\bfB-\bfB^{\eps}) 
  &
  \leq  \| \partial_t  \bfB \|_{L^{\infty}(\Omega)^{d\times d}} \|G - G(\phi))\|_{L^2(\Omega)}  \|\bfB-\bfB^{\eps}\|_{L^2(\Omega)^{d\times d}}
  \\
    &
  \leq  \frac{1}{4 \delta_2}\| \partial_t  \bfB \|^2_{L^{\infty}(\Omega)^{d\times d}} \|G - G(\phi))\|^2_{L^2(\Omega)} + \delta_2 \|\bfB-\bfB^{\eps}\|^2_{L^2(\Omega)^{d\times d}}.
\end{align*}
Using the 
following
trilinear decomposition
\begin{equation}\label{eq:first-trilinear-decomp}
    G (\bfv  \cdot \nabla)\bb  - G(\phi) (\bfv^\eps \cdot \nabla) \bb^\eps
    = (G - G(\phi)) (\bfv \cdot \nabla) \bb
    + G(\phi)(\bfv^{\eps}  \cdot \nabla) (\bb - \bb^\eps)
    + G(\phi)((\bfv - \bfv^\eps)  \cdot \nabla) \bb,
\end{equation}
we have
\begin{align*}
\mathcal{I}_6 =&   \int_\Omega (G - G(\phi)) (\bfv \cdot \nabla) \bb :(\bfB-\bfB^{\eps})
    +  \int_\Omega  G(\phi)(\bfv^{\eps}  \cdot \nabla) (\bb - \bb^\eps) :(\bfB-\bfB^{\eps})
    \\
    &
    +  \int_\Omega  G(\phi)((\bfv - \bfv^\eps)  \cdot \nabla) \bb :(\bfB-\bfB^{\eps})
    \\
    \leq &
   \frac{1}{4 \delta_2} \|G-G(\phi)\|^2_{L^2(\Omega)}\| \nabla \bfB\|^2_{L^{\infty}(\Omega)^{{d\times d\times d}}}\| \bfv\|^2_{L^{\infty}(\Omega)^d} + \delta_2\| \bfB-\bfB^{\eps}\|^2_{L^2(\Omega)^{{d\times d}}}
\\
&
+  \int_\Omega  G(\phi)(\bfv^{\eps}  \cdot \nabla) (\bb - \bb^\eps) :(\bfB-\bfB^{\eps})
+\frac{G_S^2}{2\delta_3}  \| \nabla\bfB \|^2_{L^{\infty}(\Omega)^{{d\times d}}} \| \bfB-\bfB^{\eps}\|^2_{L^2(\Omega)^{{d\times d}}}  + \delta_3 \| \bfv- \bfv^{\eps}\|^2_{L^{2}(\Omega)^d}.
\end{align*}
We integrate the remaining term by parts, obtaining
\begin{align*}
  \int_\Omega  G(\phi)(\bfv^{\eps}  \cdot \nabla) (\bb - \bb^\eps) :(\bfB-\bfB^{\eps}) = {-\frac{1}{2}\int_{\Omega}( \nabla G(\phi) \cdot \bfv^{\eps} ) |\bfB-\bfB^{\eps}|^2. }
\end{align*}

Using the analog of trilinear decomposition~\eqref{eq:first-trilinear-decomp}
for terms in $\mathcal{I}_7$ and $\mathcal{I}_8$, we obtain
\begin{align*}
\mathcal{I}_7 + \mathcal{I}_8 & \leq
2\|G-G(\phi)\|_{L^2(\Omega)}\|\bb\|_{L^{\infty}(\Omega)^{{d\times d}}} \|\nabla \bfv\|_{L^{\infty}(\Omega)^{{d\times d}}} \|\bb - \bb^{\eps}\|_{L^2(\Omega)^{{d\times d}}}
\\
&
+2\|G(\phi)\|^2_{L^{\infty}{(\Omega)}}  \| \nabla \bfv^{\eps} \|^2_{L^{\infty}(\Omega)^{{d\times d}}}   \|\bb-\bb^{\eps}\|^2_{L^2(\Omega)^{{d\times d}}}
\\
&
+\frac{C_K^2}{\delta_1}\|G(\phi)\|^2_{L^{\infty}{(\Omega)}} \|\bb \|^2_{L^{\infty}(\Omega)} \|\bb-\bb^{\eps}\|^2_{L^2(\Omega)^{{d\times d}}}  
+ \delta_1\|\bD( \bfv-\bfv^{\eps})\|_{L^{2}(\Omega)^{{d\times d}}}
\\
&\leq \frac{1}{\delta_2}\|\bb\|^2_{L^{\infty}(\Omega)^{{d\times d}}}\|G - G(\phi) \|^2_{L^2(\Omega)} + \delta_1 \|\bD( \bfv-\bfv^{\eps})\|_{L^{2}(\Omega)^{{d\times d}}}
\\&+\Big (\delta_2\|\nabla\bfv\|^2_{L^{\infty}(\Omega)^{{d\times d}}}
+ 2 \|G(\phi)\|^2_{L^{\infty}{(\Omega)}}  \| \nabla \bfv^{\eps} \|^2_{L^{\infty}(\Omega)^{{d\times d}}} \Big)  \|\bb - \bb^{\eps}\|^2_{L^2(\Omega)^{{d\times d}}}
\\
&
 +\frac{C_K^2}{ \delta}\|G(\phi)\|^2_{L^{\infty}{(\Omega)}} \|\bb \|^2_{L^{\infty}(\Omega)^{{d\times d}}}  \|\bb - \bb^{\eps}\|^2_{L^2(\Omega)^{{d\times d}}}.
\end{align*}
Next,
we have   
\begin{align*}
\mathcal{I}_9
    &= \int_\Omega (\alpha - \alpha(\phi)) \bb:(\bfB-\bfB^{\eps})
    + \int_\Omega\alpha(\phi)|\bfB-\bfB^{\eps}|^2
\\
&
     \leq \frac{\alpha_F}{2} \| \bb \|_{L^{\infty}(\Omega)^{{d\times d}}} {\left\|\mathbbm{1}_F - \frac{1 + \phi}{2}\right\|}_{L^2(\Omega)} \|\bfB-\bfB^{\eps}\|_{L^2(\Omega)^{d\times d}}
         + \int_\Omega\alpha(\phi)|\bfB-\bfB^{\eps}|^2
         \\
&
     \leq \frac{\alpha_F^2}{16 \delta_2}  \| \bb \|^2_{L^{\infty}(\Omega)^{{d\times d}}} {\left\|\mathbbm{1}_F - \frac{1 + \phi}{2}\right\|}^2_{L^2(\Omega)} + \delta_2 \|\bfB - \bfB^{\eps}\|^2_{L^2(\Omega)^{{d\times d}}}
         + \int_\Omega\alpha(\phi)|\bfB-\bfB^{\eps}|^2.
\end{align*}
Finally,
\begin{align*}
\mathcal{I}_{10}
& \leq \frac{\alpha_F}{2}{\left\|\mathbbm{1}_F - \frac{1 + \phi}{2}\right\|}_{L^2(\Omega)} \| \bfB-\bfB^{\eps} \|_{L^2(\Omega)^{{d\times d}}}
\\
& \leq  \frac{\alpha_F^2}{16 \delta_2}{\left\|\mathbbm{1}_F - \frac{1 + \phi}{2}\right\|}^2_{L^2(\Omega)} + \delta_2 \| \bfB-\bfB^{\eps} \|^2_{L^2(\Omega)^{{d\times d}}}.
\end{align*}
The  remainder term we estimate as follows:
\begin{align*}
\mathcal{R} = - \gamma\eps\int_\Omega(\nabla\phi\otimes\nabla\phi):\nabla(\bfv - \bfv^\eps)
\leq \gamma^2\eps^2\frac{C_K^2}{4 \delta_1}\|\nabla\phi\|^4_{{L^4(\Omega)^d}}+\delta_1 \|\bD(\bfv - \bfv^\eps)\|^2_{L^2(\Omega)^{{d\times d}}}.
\end{align*}

Putting everything together, we obtain the following error estimate: 
\begin{align*}
 &\frac{\rho}{2}\frac{\D}{\dt}\|\bfv - \bfv^\eps\|^2_{L^2(\Omega)^d}
+  \frac12   \frac{\D}{\dt} \int_\Omega G(\phi) |\bfB-\bfB^{\eps}|^2+
  \int_\Omega 2\mu(\phi)|\bD(\bfv - \bfv^\eps)|^2
  + \int_\Omega \alpha(\phi)|\bfB-\bfB^{\eps}|^2
  \\
  &\leq
     \rho C_K \|\bfv - \bfv^\eps\|^2_{L^2(\Omega )^d}\|\bD(\bfv^{\eps})\|_{L^\infty(\Omega)^{d\times d}}
    +5 \delta_1 \|\bD(\bfv - \bfv^\eps)\|^2_{L^2(\Omega)^{d\times d}}
    +
     \frac{1}{\delta_1}  \|\bD(\bfv) \|^2_{L^{\infty}(\Omega)^{{d\times d}}} \|\mu - \mu(\phi)\|^2_{L^2(\Omega)}
     \\&
        +\frac{C_K^2}{4\delta_1} \|\bb\|^2_{L^\infty(\Omega)^{d\times d}} \|G -G(\phi) \|^2_{L^2(\Omega)^{d\times d}}
        + \frac{C_K^2}{4\delta_1} \|G( \phi) \|^2_{L^\infty(\Omega)}\|\bb - \bb^\eps\|^2_{L^2(\Omega)^{d\times d}}
    \\& +\frac{1}{4 \delta_2}\| \partial_t  \bfB \|^2_{L^{\infty}(\Omega)^{d\times d}} \|G - G(\phi))\|^2_{L^2(\Omega)} + 4\delta_2 \|\bfB-\bfB^{\eps}\|^2_{L^2(\Omega)^{d\times d}}+\frac{1}{2}\|D_t^{\eps} G(\phi)\|_{L^\infty(\Omega)} \|\bfB-\bfB^{\eps}\|^2_{L^2(\Omega)^{{d\times d}}}
    \\
        & + \frac{1}{4 \delta_2} \|G-G(\phi)\|^2_{L^2(\Omega)}\| \nabla \bfB\|^2_{L^{\infty}(\Omega)^{{d\times d\times d}}}\| \bfv\|^2_{L^{\infty}(\Omega)^d} 
    +\frac{G_S^2}{2\delta_3}  \| \nabla\bfB \|^2_{L^{\infty}(\Omega)^{{d\times d\times d}}} \| \bfB-\bfB^{\eps}\|^2_{L^2(\Omega)^{d\times d}}  
    \\&
    + \delta_3 \| \bfv- \bfv^{\eps}\|_{L^{2}(\Omega)^d}
    + \frac{1}{\delta_2}\|\bb\|^2_{L^{\infty}(\Omega)^{d\times d}}\|G -G(\phi) \|^2_{L^2(\Omega)}
     \\&+\Big (\delta_2\|\nabla\bfv\|^2_{L^{\infty}(\Omega)^{d\times d}}
+ 2 \|G(\phi)\|^2_{L^{\infty}}  \| \nabla \bfv^{\eps} \|^2_{L^{\infty}(\Omega)^{d\times d}} 
+\frac{C_K^2}{ \delta_1}\|G(\phi)\|^2_{L^{\infty}(\Omega)} \|\bb \|^2_{L^{\infty}(\Omega)^{d\times d}} \Big ) \|\bb-\bb^{\eps}\|^2_{L^2(\Omega)^{d\times d}}
\\&
+ \frac{\alpha_F^2}{16 \delta_2}  \| \bb \|^2_{L^{\infty}(\Omega)^{d\times d}} \|\mathbbm{1}_F - \phi\|^2_{L^2(\Omega)}
+\frac{\alpha_F^2}{16 \delta_2}\|\mathbbm{1}_F - \phi\|^2_{L^2(\Omega)} 
+\gamma^2\eps^2\frac{C_K^2}{4 \delta_1}\|\nabla\phi\|^4_{{L^4(\Omega)^d}}.
\end{align*}  
Using the introduced notation for the error energy $\cE(t)$ and collecting the terms, we can rewrite the above estimate as
\begin{align*}
  &    \frac{\D}{\dt} \cE(t)+
  \int_\Omega 2\mu(\phi)|\bD(\bfv - \bfv^\eps)|^2
  + \int_\Omega \alpha(\phi)|\bfB-\bfB^{\eps}|^2 
  \\
&  \leq \Lambda(t)\cE(t)+ F\|\phi-(\mathbbm{1}_F - \mathbbm{1}_S)(t)\|^2_{L^2(\Omega)}+ \gamma^2\eps^2\frac{C_K^2}{\delta_1}\|\nabla\phi(t)\|^4_{{L^4(\Omega)^d}}
  +5\delta_1\|\bD(\bfv-\bfv^\eps)\|^2_{L^2(\Omega)^{{d\times d}}}.
\end{align*}
By taking $\delta_1<  \frac{4 \mu_*}{5}$ and applying the Gr\"onwall lemma, the modeling error estimate \eqref{modelling-error-estimate} follows.
\end{proof}

\newcommand{\dtee}{\mathrm{d}_t}

\section{Numerical method}

To solve the diffuse interface FSI problem~\eqref{m1}-\eqref{m8}, we propose a partitioned numerical method where the problem equations are decoupled so that the Cahn-Hilliard problem, the Navier-Stokes problem, and the transport of the Cauchy-Green stress tensor are solved separately. As it was mentioned in~\cite{Mokbel2018}, the addition of the trace of the Cauchy-Green stress tensor in~\eqref{m4} is necessary for the analysis, however, it also leads to inaccurate description of the interface layer. Hence, this term is omitted in the numerical method.  Since we consider the numerical approximation of only the diffuse interface model, we omit the superscript $^{\eps}$ in the following sections to simplify the notation.
 The discretization in time is done using the finite element method. Because the spatial discretization is standard, below we present the proposed method semi-discretized in time.  
 
 \subsection{Temporal discretization}

Let $\Delta t$ be the  time step and $t^{n} = n \Delta t$, for  all $0 \leq n \leq N$, where the final time is $T=N \Delta t$, and $t^{n+\frac12} = t^n +\frac12 \Delta t,$  for all $n \geq 0$. Let $z^n$ be the approximation of a time-dependent function $z$ at time $n$. We denote the discrete time derivative by 
$$d_t z^{n + 1} := \frac{z^{n + 1} - z^n}{\Delta t}.$$ 
Let $W'_{lin}(\phi^*, \phi^{n+1})$ denote a linearization of  the energy density $W'(\phi^{n+1})$ such that $W'_{lin}(\phi^{n+1}, \phi^{n+1}) = W'(\phi^{n+1})$.
To solve problem~\eqref{m1}--\eqref{m8}, we propose the following algorithm.
\vskip 0.1 in
\noindent\textbf{Algorithm 1}. Given the initial conditions $\bfv^0, \bfB^0$ and $\phi^0$, compute the following steps.
\\
Set the initial guesses as the linearly extrapolated values:
\begin{align*}
& \bfv^{n+\frac12}_{(0)}
=  \frac32 \bfv^{n}  - \frac12 \bfv^{n-1} ,
\\
&\phi^{n+\frac12}_{(0)}
= \frac32 \phi^{n}  - \frac12 \phi^{n-1},
\\
&\bfB^{n+\frac12}_{(0)}
= \frac32  \bfB^{n}  - \frac12 \bfB^{n-1}.
\end{align*}
\noindent\textsc{Step 1}: For  $\kappa\geq 0$, compute until convergence the following  partitioned problem:

\noindent\textbf{Cahn-Hilliard problem:} Find $(\phi^{n+\frac12}_{(\kappa+1)}, m^{n+\frac12}_{(\kappa+1)})$ such that
    \begin{align*}
     &   \frac{\phi^{n+\frac12}_{(\kappa+1)} - \phi^n}{  \Delta t/2} 
        + (\bfv^{n+\frac12}_{(\kappa)} \cdot \nabla) \phi^{n+\frac12}_{(\kappa+1)} = \nabla \cdot \left( M(\phi_{(\kappa)}^{n + \frac12}) \nabla m^{n + \frac12}_{(\kappa+1)} \right) &\text{ in }\Omega,
        \\
 &        \gamma \left( \frac{1}{\epsilon} W'_{lin}(\phi^{n+\frac12}_{(\kappa)},\phi^{n+\frac12}_{(\kappa+1)}) - \eps \Delta \phi^{n + \frac12}_{(\kappa+1)} \right) 
 = m^{n + \frac12}_{(\kappa+1)} & \text{ in }\Omega.
\end{align*}

\noindent\textbf{Transport of the left Cauchy–Green deformation tensor:} Find $\bfB^{n+\frac12}_{(\kappa+1)}$ such that
\begin{align*}
        &G(\phi^{n+\frac12}_{(\kappa+1)})\left( \frac{ \boldsymbol B^{n+\frac12}_{(\kappa+1)} - \boldsymbol B^{n}}{  \Delta t/2} 
        + (\bfv^{n+\frac12}_{(\kappa)} \cdot \nabla ) \bb^{n+\frac12}_{(\kappa+1)}
        - (\nabla \bfv^{n+\frac12}_{(\kappa)}) \bfB^{n+\frac12}_{(\kappa+1)}
        - \bfB^{n+\frac12}_{(\kappa+1)} (\nabla \bfv^{n+\frac12}_{(\kappa)})^{\top} \right)
        \\
     &   \qquad
        + \alpha(\phi^{n+\frac12}_{(\kappa+1)}) (\bfB^{n+\frac12}_{(\kappa+1)}-   \mathbf{I}) 
       + \delta_{stab} \Delta \bfB^{n+{\frac12}}_{(\kappa+1)}
         = 0  & \text{ in }\Omega.
\end{align*}

\noindent\textbf{Navier-Stokes problem:} Find $(\bfv^{n+\frac12}_{(\kappa+1)}, p^{n+\frac12}_{(\kappa+1)})$ such that
\begin{align*}
 	& \frac{\rho(\phi^{n+\frac12}_{(\kappa+1)})\bfv^{n+\frac12}_{(\kappa+1)} -\rho(\phi^n) \bfv^{n}}{  \Delta t/2}
        + (\bfv^{n+\frac12}_{(\kappa)} \cdot \nabla ) (\rho(\phi^{n + \frac12}_{(\kappa+1)}) \bfv^{n+\frac12}_{(\kappa+1)} )
        \\
& \qquad 
        + \frac{\rho_F - \rho_S}{2}M(\phi^{n+\frac12}_{(\kappa+1)})(\nabla m^{n+\frac12}_{(\kappa+1)} \cdot\nabla)\bfv^{n+\frac12}_{(\kappa+1)}
        - \nabla \cdot \left( 2 \mu(\phi^{n+\frac12}_{(\kappa+1)}) \bD(\bfv^{n+\frac12}_{(\kappa+1)}) \right)
        + \nabla p^{n+\frac12}_{(\kappa+1)}
        \\
        &\qquad
        = \nabla \cdot \left(G(\phi^{n+\frac12}_{(\kappa+1)}) \bfB^{n+\frac12}_{(\kappa+1)} \right)
        - \gamma \eps \nabla \cdot \left( \nabla \phi^{n+\frac12}_{(\kappa+1)} \otimes \nabla \phi^{n+\frac12}_{(\kappa+1)} \right)
        + 
        \rho(\phi^{n+\frac12}_{(\kappa+1)})\bff & \text{ in }\Omega,
        \\
        &\nabla \cdot \bfv^{n+\frac12}_{(\kappa+1)}= 0 & \text{ in }\Omega.
\end{align*}

\noindent \textsc{Step 2:}
Denote the converged solutions by 
${\boldsymbol v}^{n+\frac12} , p^{n+\frac12}, 
\bfB^{n+\frac12},
\psi^{n+\frac12},$
 and  
$m^{n+\frac12}.$
Compute the solutions at time $t^{n+1}$ as the following linear extrapolations: 
\begin{align}
&
\rho(\phi^{n+1}){\boldsymbol v}^{n+1} 
= 
2 \rho(\phi^{n+\frac12}) {\boldsymbol v}^{n+\frac12} -  \rho(\phi^{n}){\boldsymbol v}^{n}
&
\mbox{ in }\Omega,
\label{ex1}
\\
& \bfB^{n+1} 
= 2  \bfB^{n+\frac12} -   \bfB^{n} 
&
\mbox{ in }\Omega,
\label{ex2}
\\
&
{\phi}^{n+1} 
= 
2 {\phi}^{n+\frac12} - {\phi}^{n}
&
\mbox{ in }\Omega.
\label{ex3}
\end{align}
Set $n=n+1$, and go back to Step 1.

\begin{remark}
We note that  the following stabilization term is added in the equation for the transport of the left Cauchy–Green deformation tensor:
$$
\delta_{stab} \Delta \bfB^{n+{\frac12}}_{(\kappa+1)}.
$$
This term is commonly added to stabilize the transport problem, and to aid in the analysis~\cite{Mokbel2018}.
\end{remark}

\section{Numerical results}\label{sec:results}

The method proposed in Algorithm~1 is discretized in space using the finite element method and implemented in the high-performance programming language Julia, using the FEM package Gridap~\cite{verdugo2022software,Badia2020}.  In this section, we use the double-well potential defined as  $W'(\phi) = \phi^3 - \phi$, linearized as $$W'_{lin}(\phi_{(\kappa)}^{n+\theta},\phi_{(\kappa+1)}^{n+\theta})=  \phi^{n+\theta}_{(\kappa+1)} (\phi^{n+\theta}_{(\kappa)})^2 - \phi^{n+\theta}_{(\kappa+1)}. $$

We first present an example where we study the convergence rates of the proposed method. Then, we look at an FSI problem with contact, describing an elastic ball falling and reaching the bottom wall.

\subsection{Example 1: Rates of convergence}\label{sub:rates}

To test the convergence rates, we use the method of manufactured solutions. We set the computational domain to be the unit square. The exact solution is given by:
\begin{align*}
        &\bfv_{ref} = 0.2\sin(\pi t)
        \begin{bmatrix}
 \sin(\pi x) \cos(\pi y) \\
   -\cos(\pi x) \sin(\pi y)
        \end{bmatrix}, 
        \\
        &\bfB_{ref} =  10^{-5} \sin(\pi t) (x+10)^2 (y+10)^2
        \begin{bmatrix}
 1 & 6   \\
6  & 5
        \end{bmatrix}, 
        \\
        &\phi_{ref} = \sin(\pi t) \cos(\pi x) \cos(\pi y),   
    \end{align*}
 and $p_{ref}$ is assumed to be a constant.
The forcing term $\boldsymbol{f}$ in~\eqref{m1} is computed using the exact solution. Forcing terms based on the exact solutions are also added to equations~\eqref{m3} and~\eqref{m5}. Dirichlet boundary conditions are imposed for the velocity, and the homogeneous Neumann conditions are used in the Cahn-Hilliard problem. The final time is $T = 0.8$ s. We use $\mathbb{P}_2 - \mathbb{P}_1$ elements for the $\bfv$ and $p$, $\mathbb{P}_1$ elements for $\bfB$, and $\mathbb{P}_2$ elements for $\phi$ and $m$. 

We consider two sets of parameters, shown in Table~\ref{parameters1}. In Set 1, $G_F$ is a positive, nonzero number and the structure is viscoelastic, while in the other set  both $G_F$ and $\mu_S$ are set to zero. In both cases, we use $M(\phi)=1$. 
\begin{table}[ht]
\centering{
        \begin{tabular}{r l | r l }
         \multicolumn{2}{c}{\textbf{Case 1}}    &  \multicolumn{2}{c}{\textbf{Case 2}}  \\
        \textbf{Parameter} & \textbf{Value} & \textbf{Parameter} & \textbf{Value}  \\
        \hline
        \hline
         $\rho_F$  & $1$ &     $\rho_F$  & $1$  \\  
         $\rho_S$  & $1$ &     $\rho_S$ & $1$ \\
         $\mu_F$  & $1$ &   $\mu_F$ & $1$  \\
         $\mu_S$   & $0.5$  &     $\mu_S$   & $0$  \\
          $G_F$  & $0.5$   &   $G_F$    & $0$    \\
         $G_S$     & $1$ &   $G_S$    & $1$\\
         $\alpha_F$   & 1 &   $\alpha_F$  & 1    \\
         $\gamma$ & $10^{-4}$ &   $\gamma$ &  $10^{-4}$  \\ 
    \end{tabular}
    \caption{The parameters used in Example 1.}\label{parameters1}
    }
\end{table}

To compute the rates of convergence, we use the following set of discretization parameters
$$\left\{  \Delta t, \Delta x \right\} = \left\{  \frac{ 0.2}{2^i}, \frac{0.2}{2^i} \right\}_{i=0}^3.$$  
 The parameter $\epsilon$ is defined as $\epsilon = 4 \Delta x$, and it  changes with the mesh size.
We define the relative errors  as
\begin{align*}
        e_{v} = \frac{\|\bfv_{ref}^N - \bfv_h^N \|_{L^2(\Omega)^2}}{\| \bfv^N_{ref} \|_{L^2(\Omega)^2}},
        \quad 
        e_{B} = \frac{\| \bfB^N_{ref} - \bfB^N_h \|_{L^2(\Omega)^{{d\times d}}}}{\| \bfB^N_{ref} \|_{{L^2(\Omega)^{d\times d}}}},
        \quad
 e_{\phi} = \frac{\|\phi^N_{ref} - \phi^N_h \|_{L^2(\Omega)}}{\| \phi^N_{ref} \|_{L^2(\Omega)}},
    \end{align*}
    evaluated at the final time. 
Figure~\ref{errors} shows the relative errors obtained using  the two cases of the problem parameters defined in Table~\ref{parameters1}.
\begin{figure}[H]
    \centering
    \includegraphics[width = 1.0\textwidth]{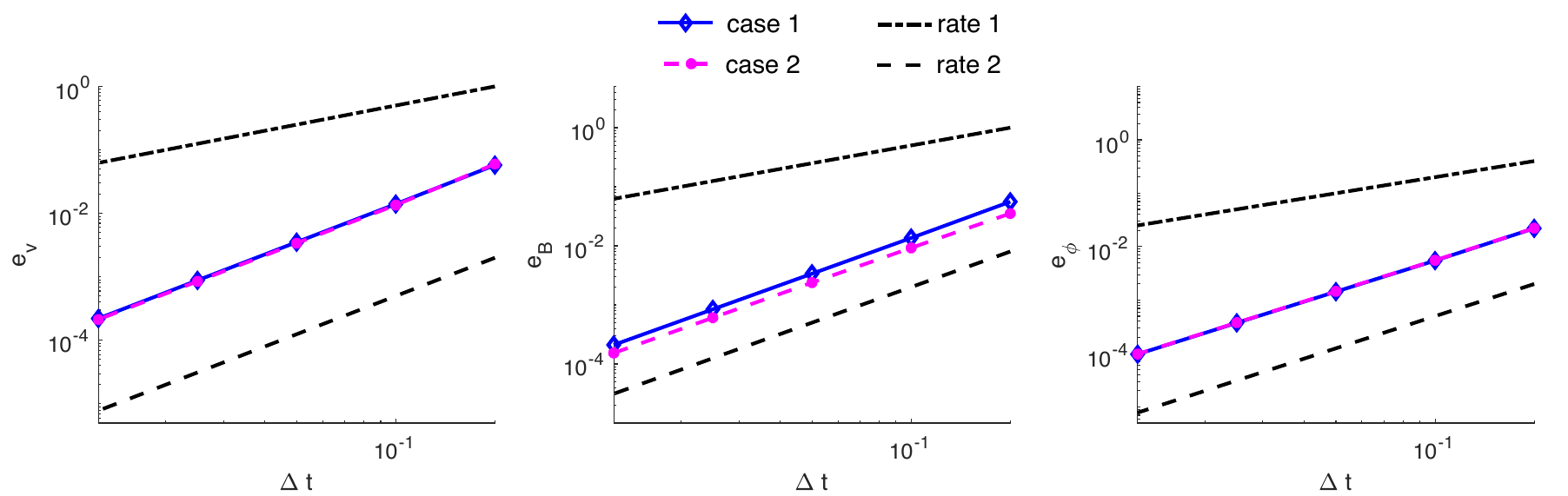}
    \caption{Relative errors computed at the final time.}
    \label{errors}
\end{figure}

As expected, the second order convergence is obtained. The magnitude of the errors is similar for both parameter regimes. However, we note that the stabilization is only needed for Case 2, where we took  $\delta_{stab}=10^{-3},$ while $\delta_{stab}=0$ for Case 1.  To understand why the stabilization is need in Case 2, we considered separate cases when either $\mu_S=0$ or $G_F=0,$ and our results revealed that, in this example, we need stabilization only when  $\mu_S=0$, i.e., in case of purely elastic structures.

We also measure the norms of the solution which we assumed were bounded in Section~\ref{modelingerror}. In particular, $\| \nabla \bfv \|_{L^2(\infty) (\Omega)^{d \times d}}, \| D_t^{\eps} \phi \|_{L^{\infty} (\Omega)},$ and $\| \phi\|^4_{W^{1,4}(\Omega)}$ are shown in Figure~\ref{norms}. The figures are obtained using  both sets of parameters defined in Table~\ref{parameters1}. 
\begin{figure}[H]
    \centering
    \includegraphics[width = 1.0\textwidth]{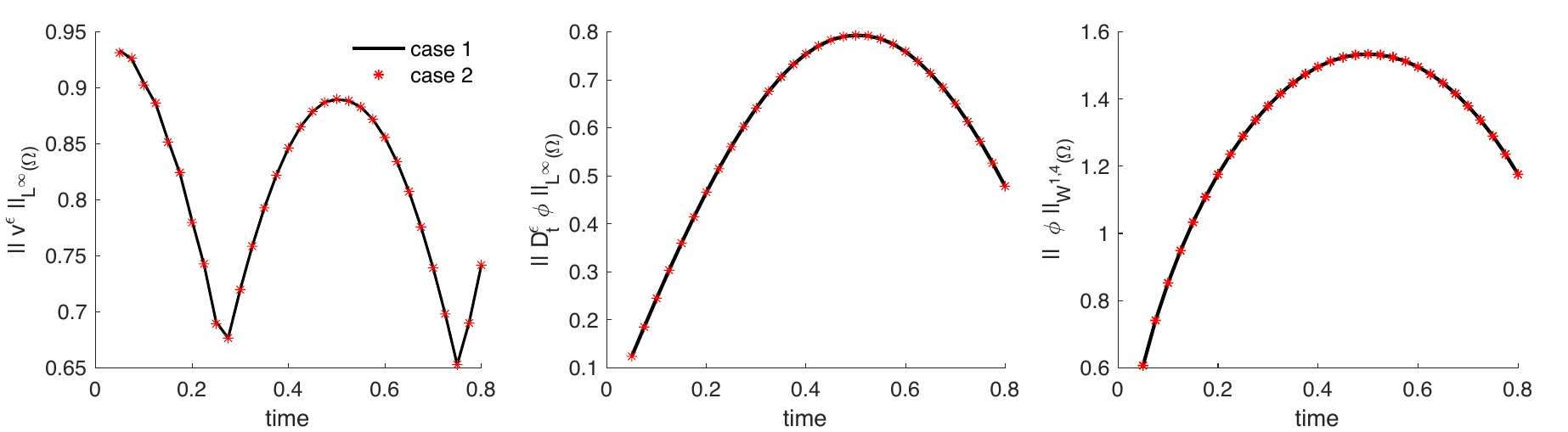}
    \caption{Norms of the solution used in Section~\ref{modelingerror}.}
    \label{norms}
\end{figure}
In all cases the norms  used in the assumptions remain bounded, and no differences are observed when different parameters are used.

\subsection{Example 2}

In the second example, we illustrate the performance of the proposed method on a benchmark problem with contact. We consider an elastic ball immersed in a fluid, falling from a set height and touching the bottom wall. The computational domain is defined as  the unit square, $\Omega=(0,1) \times (0,1).$ To model the contact with a rigid wall using the phase-field approach, it is enough to impose the no-wetting condition on the phase-field function:
$$ \phi = 1\quad  \textrm{on } \partial \Omega \times (0,T).$$
For the fluid velocity, we impose $\bfv =0 $ on $\partial \Omega $, and the homogeneous Neumann conditions are imposed for the transport of $\bfB$. We consider two different cases, one where a ball pulled down by gravity bounces off the bottom wall (Case 1), and one where a softer ball is pulled by a larger force which tightly holds it at the bottom boundary (Case 2).  The parameters used in both cases are given in Table~\ref{parameters2}. 
\begin{table}[ht]
\centering{
    \begin{tabular}{l l | l l }
         \multicolumn{2}{c}{\textbf{Case 1}}    &  \multicolumn{2}{c}{\textbf{Case 2}}  \\
        \textbf{Parameters} & \textbf{Values} & \textbf{Parameters} & \textbf{Values}  \\
        \hline
        \hline
         $\rho_F$ (g/cm$^3$)& $1$ &     $\rho_F$ (g/cm$^3$)& $1$  \\  
         $\rho_S$ (g/cm$^3$)& $10$ &     $\rho_S$ (g/cm$^3$)& $10$ \\
         $\mu_F$ (poise) & $5 \cdot 10^{-4}$ &   $\mu_F$ (poise) & $0.04$  \\
         $\mu_S$ (poise) & $200$  &     $\mu_S$ (poise) & $100$  \\
          $G_F$ (dyne/cm$^2$)  & $0.0$   &   $G_F$ (dyne/cm$^2$)  & $0$    \\
         $G_S$ (dyne/cm$^2$)    & $5 \cdot 10^5$ &   $G_S$ (dyne/cm$^2$)  & $5 \cdot 10^3$\\
         $\alpha_F$  (dyne/cm$^2$s) &$ 5 \cdot 10^4$ &   $\alpha_F$ (dyne/cm$^2$s) & $ 5 \cdot 10^4$    \\
          $\boldsymbol{f}$  (cm/s$^2$) & $(0,-10^3)^{\top}$ &   $\boldsymbol{f}$  (cm/s$^2$) & $(0,-5 \cdot 10^3)^{\top}$ \\ 
    \end{tabular}
    \caption{The parameters used in Example 2.}\label{parameters2}
    }
\end{table}
In both cases, we use $\gamma=10^{-3}, \epsilon=2.5 \cdot 10^{-3},$ and $M=10^{-2}$ in the Cahn-Hilliard problem. In Case 1,  the stabilization coefficient is set to $\delta_{stab}=10^{-3}$, while in Case 2, a larger stabilization of $\delta_{stab}=10^{-1}$ is needed. In both cases, the fluid is initially at rest, $\bfv=0$, and $\bfB= \mathbf{I}$. Initially, the structure is a circle of radius 0.2, centered at $(0.5, 0.7)$. Therefore, at $t=0$, the phase-field function is  defined as
\begin{align*}
\phi = \begin{cases}
-1, & \textrm {if } (x-0.5)^2+(y-0.7)^2 \leq 0.04
\\
1, & \textrm{otherwise}.
\end{cases}
\end{align*}
We use a Union Jack type of mesh consisting of 5204 elements, which is refined around the interface, i.e., in the region where $\nabla \phi$ is non-zero. The simulations are performed until the steady state is reached using a time step of $\Delta t = 10^{-4}$.

\begin{figure}[ht]
    \centering
    \includegraphics[width = 0.8\textwidth]{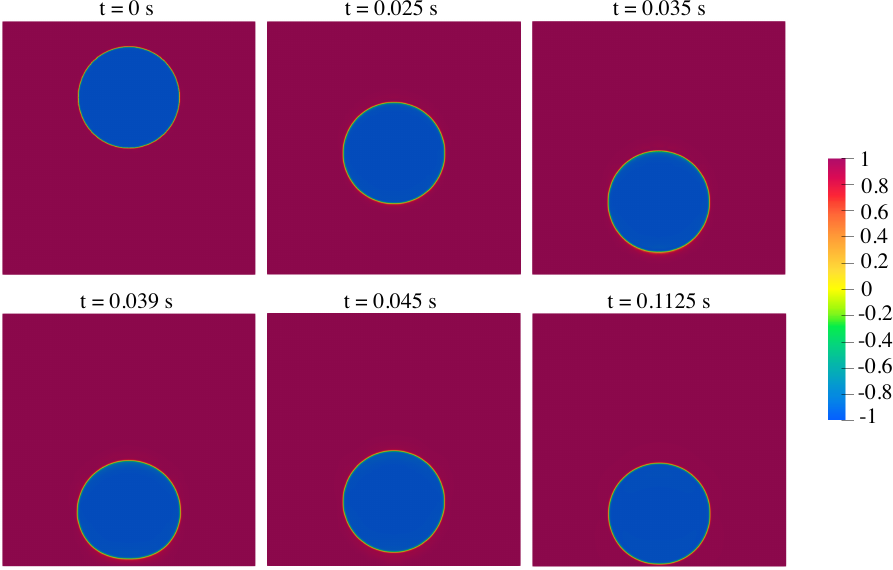}
    \caption{The phase-field function, $\phi$, indicating the position of the elastic structure in Case 1.}
    \label{case1phi}
\end{figure}
Figure~\ref{case1phi} shows the  phase-field function for Case 1, indicating the position of the elastic ball over time. Around $t=0.039$ s, the ball makes the contact with the bottom wall. Afterwards, it slightly bounces back up, and then finally settles back down at the bottom of the domain. 
The velocity streamlines at three different time instances, colored by the velocity magnitude, are shown in Figure~\ref{case1velocity}. 
\begin{figure}[ht]
    \centering
    \includegraphics[width = 0.8\textwidth]{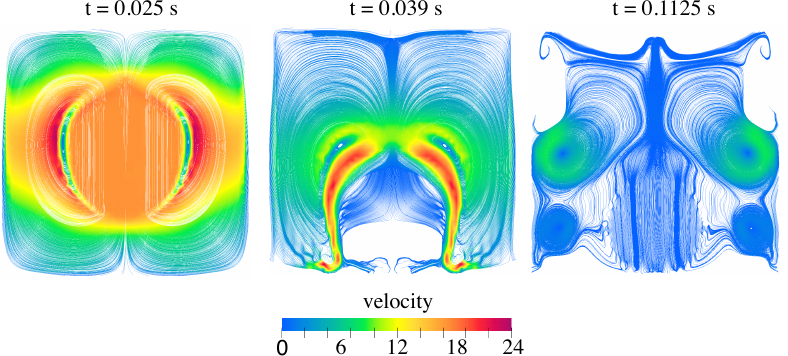}
    \caption{The velocity streamlines colored by the velocity magnitude in Case 1.}
    \label{case1velocity}
\end{figure}

In Case 2, the parameters are chosen so that the ball is softer, and a larger force is pulling it down. This results in a different dynamics compared to Case 1. 
\begin{figure}[ht]
    \centering
    \includegraphics[width = 0.8\textwidth]{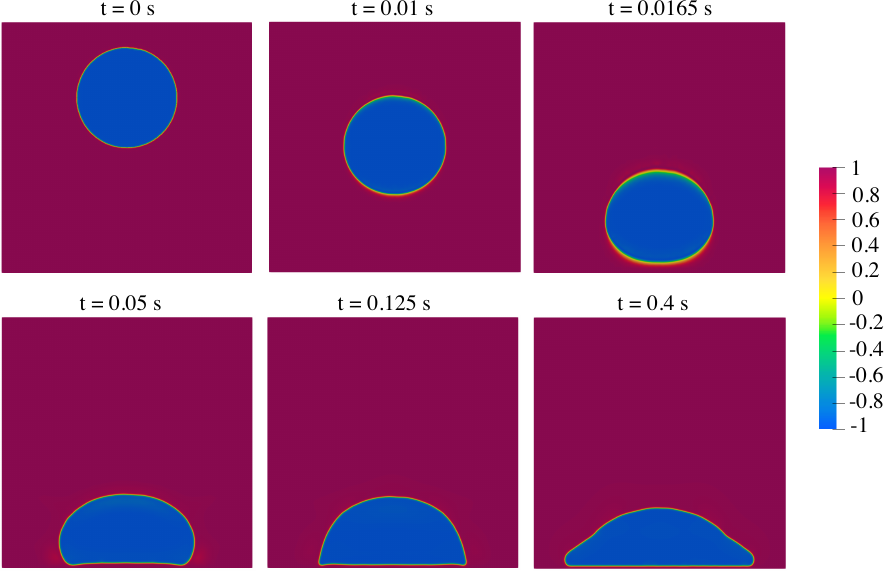}
    \caption{The phase-field function, $\phi$, indicating the position of the elastic structure in Case 2.}
    \label{case2phi}
\end{figure}
The phase-field function obtained using this set of parameters is shown in Figure~\ref{case2phi}. Due to a larger force, the ball falls faster,  and undergoes a larger deformation compared to the previous case. While the shape of the ball adjusts due to a recoil after the contact, the ball never detaches from the bottom wall. Instead, the force slowly pulls it further down. The velocity streamlines in this case are shown in Figure~\ref{case2velocity}. 
\begin{figure}[ht]
    \centering
    \includegraphics[width = 0.8\textwidth]{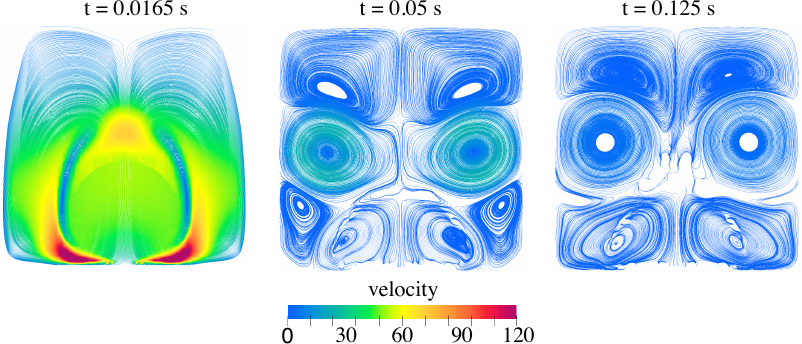}
    \caption{The velocity streamlines colored by the velocity magnitude in Case 2.}
    \label{case2velocity}
\end{figure}

\section{Conclusion}

In this work, we have studied the diffuse interface method for FSI problems formulated in the Eulerian framework. We derived energy estimates for the diffuse interface model at the continuous level, and obtained error estimates quantifying the modeling error between the diffuse and sharp interface solutions. 
We further proposed a novel numerical scheme for the coupled system. The scheme was validated on two numerical examples. In the first example, we performed convergence tests demonstrating that the method achieves second-order accuracy. In the second example, we applied  the numerical  scheme to simulate the contact between an elastic ball immersed in a fluid and a rigid wall, where contact was handled in a natural and straightforward manner by imposing a no-wetting boundary condition on the phase-field function. A couple of parameter regimes were explored, revealing different problem dynamics.
Our results demonstrate that the diffuse interface approach offers a flexible  framework for FSI, with particular advantages in handling topological events such as contact. In the future work, we  may consider extensions to three-dimensional settings  and multibody contact problems.

\section{Acknowledgement}
The authors would like to acknowledge Jordi Manyer for his assistance in learning Gridap, and for answering our  numerous questions throughout this work.

\section{Data Availability}

Enquiries about data availability should be directed to the authors.

\section{Declarations}
MB is  supported in part by the National Science Foundation via grants NSF DMS-2208219 and NSF DMS-2205695. BM was supported by the Croatian Science Foundation under the project number IP-2022-10-2962 and by Croatia-USA bilateral grant “The mathematical framework for the diffuse interface method applied to coupled problems in fluid dynamics”. FA is supported by a Society of Science Postdoctoral Fellowship from the College of Science at the University of Notre Dame.

\bibliographystyle{siam}
\bibliography{bib}

\end{document}